\documentclass[12pt]{amsart}

\usepackage{extsizes,comment,amsmath,amssymb}
\usepackage[margin=1in]{geometry}
\usepackage{graphicx}
\usepackage{pgfplots,mathtools,float,mathrsfs}
\usepackage{booktabs,enumitem}
\usepackage{pgfplotstable}
\usepgfplotslibrary{groupplots}
\pgfplotsset{compat=newest}

\usepackage{xcolor}
\definecolor{colorLtwo}{HTML}{1F77B4}
\definecolor{colorHone}{HTML}{2CA02C}
\definecolor{colorHtwo}{HTML}{D62728}
\definecolor{colorBell}{HTML}{1F77B4}
\definecolor{colorArg}{HTML}{D62728}
\definecolor{colorSpec}{HTML}{2CA02C}

\usepackage[colorlinks=true,linkcolor=blue,citecolor=green!65!black,urlcolor=blue]{hyperref}
\usepackage[sort,compress]{cite}
\usepackage[capitalise,nameinlink]{cleveref}

\numberwithin{equation}{section}

\newcommand{\dd}{\,\mathrm d}
\newcommand{\per}{\mathrm{per}}

\newcommand{\citep}{\cite}

\theoremstyle{plain}
\newtheorem{theorem}{Theorem}[section]
\newtheorem{lemma}[theorem]{Lemma}
\newtheorem{proposition}[theorem]{Proposition}

\theoremstyle{definition}

\theoremstyle{remark}
\newtheorem{remark}[theorem]{Remark}

\begin{document}
\footnotetext[1]{Institute of Mathematics, Johannes Gutenberg University, Mainz, Germany}
\footnotetext[2]{Faculty of Mathematics, University of Vienna, Vienna, Austria}
\author[A.~Brunk \& M.~Fritz]{Aaron Brunk$^{1}$ \and Marvin Fritz$^{2}$}
\begin{center}
      \bf \MakeUppercase{High-order conforming finite elements for the Cahn--Hilliard equation:
Relative-energy stability and energy defects} 
\end{center}

\begin{center}
   \sc Aaron Brunk$^{1}$, Marvin Fritz$^{2}$
\end{center}

\begin{quote}\small
\textsc{Abstract.}
We study a semidiscrete single-field Galerkin approximation of the
Cahn--Hilliard equation using high-order conforming finite element spaces.
More specifically, globally \(C^1\) finite elements with \(H^2\)-conforming
trial spaces, including Argyris, Bell, and Bogner--Fox--Schmit elements,
allow a direct discretization of the fourth-order formulation and preserve
mass exactly.
The main structural result is an exact energy balance for the physical
Cahn--Hilliard energy. Besides the expected discrete dissipation,
the balance contains an explicitly computable energy defect. This defect
vanishes for Laplacian-invariant periodic spaces, such as Fourier spaces,
but is generally nonzero for classical \(C^1\) finite elements. It therefore
quantifies the precise loss of a discrete gradient-flow structure.
We prove semidiscrete a priori error estimates by a relative-energy
argument. The estimate is closed using an augmented relative energy and a
discrete elliptic reconstruction bound for the inverse discrete Laplacian.
The resulting convergence rates match the expected approximation orders. Numerical experiments with Bell and Argyris
elements confirm the rates and demonstrate the defect mechanism by
comparison with a Fourier reference discretization. \smallskip

\textsc{Keywords.}
Cahn--Hilliard equation; high-order conforming discretization; $C^1$ finite elements;
Argyris element; Bell element; Bogner--Fox--Schmit element; relative energy;
discrete $H^{-1}$ norm; energy identity; energy defect; 
a priori error estimates \smallskip

\textsc{MSC.} 65M60; 65M12; 65M15; 35K35; 35K55.
\end{quote}

\section{Introduction}\label{sec:intro}

The Cahn--Hilliard equation was originally proposed to describe phase
separation in binary alloys~\cite{cahn1958free} and has since become a
canonical model for diffuse-interface dynamics, including spinodal
decomposition, tumour growth~\cite{fritz2023tumor}, and microstructure
evolution. A key structural feature of the model is its gradient-flow
interpretation in an $H^{-1}$ metric, which implies mass conservation and
dissipation of the Ginzburg--Landau free energy at the continuous level.

Most numerical methods for the Cahn--Hilliard equation use mixed
formulations that introduce the chemical potential as an additional unknown;
see, for example,~\cite{barrett1999finite,barrett2001fully,brunk2023stability,brunk2025review} and the references therein. Mixed methods are flexible and
admit many energy-stable time discretizations. However, they also increase
the number of unknowns and obscure, to some extent, the direct relation
between the phase field and the fourth-order operator. 

Motivated by classical globally $C^1$ finite elements,
we revisit the single-field fourth-order formulation. We consider conforming
trial spaces
$
V_h\subset H^2_{\text{per}}(\Omega),
$
built from classical $C^1$ elements such as the Argyris element
\cite{argyris-0}, the Bell element~\cite{bell-0}, and the
Bogner--Fox--Schmit element~\cite{bogner1965generation}. Their approximation
properties for fourth-order problems are classical; see, for instance,
\cite{ciarlet2002finite,brenner2008mathematical}. Such spaces make it
possible to approximate the fourth-order single-field problem directly,
without auxiliary variables and without stabilization parameters of the type
often needed for $C^0$ discretizations of fourth-order equations.

High-regularity discretizations for the Cahn--Hilliard equation and related
phase-field models have been pursued from several directions. For conforming
$C^1$ finite element methods we refer to~\cite{stogner2008c1,feng2008apost,egger2025feedback};
spline- and NURBS-based approaches through isogeometric analysis were
introduced in~\cite{gomez2008isogeometric,anders2011diffusion}; and
$H^2$-conforming virtual element methods were developed more recently in
\cite{antonietti2016c1virtual,adak2024h2vem,leng2025hofrac}. Nonconforming
Morley-type methods have also been analysed as computationally cheaper
alternatives~\cite{li2019morley,wu2020morley}. The focus of the present
paper is complementary: we analyse the exact relation between the
single-field $H^2$-conforming Galerkin method and the Cahn--Hilliard
gradient-flow structure.

The first contribution is an exact energy identity for the physical
Cahn--Hilliard energy restricted to the finite element space. If
$\phi_h$ denotes the semidiscrete solution, then
$$
\frac{d}{dt}E_h(\phi_h(t))
=
-\|\partial_t\phi_h(t)\|_{H^{-1}_h}^2+\mathcal R_h(t),
$$
where $\|\cdot\|_{H^{-1}_h}$ is the natural discrete $H^{-1}$ norm and
$\mathcal R_h$ is an explicitly computable energy defect. This defect
vanishes when the trial space is invariant under the Laplacian, as in
Fourier or trigonometric spectral discretizations on periodic domains. In
that case, the semidiscrete method is exactly energy dissipative. For
classical globally $C^1$ finite element spaces, however, the defect is
generally nonzero and measures the deviation from an exact discrete
gradient-flow structure for the physical energy.

The second contribution is a relative-energy stability framework.
We compare the semidiscrete solution $\phi_h$ with a perturbed semidiscrete
trajectory $\widehat\phi_h$ and derive a relative-energy identity. The
corresponding relative defect is the analogue of the physical defect in the
two-trajectory setting. To control it, we introduce an augmented relative
energy, adding a small $H^2$-level correction. This augmentation yields a
strong $L^2$-in-time control of $\partial_t(\phi_h-\widehat\phi_h)$, which is
precisely what is needed to absorb the relative defect. The key technical
ingredient is a discrete elliptic-reconstruction estimate for
$(-\Delta_h)^{-1}$, proved by viewing the inverse discrete Laplacian as a
Poisson Ritz projection.

Finally, we apply the abstract relative-energy stability theorem to the exact
solution by taking $\widehat\phi_h$ to be a mean-preserving biharmonic Ritz
projection of $\phi$. A standard $L^2$-relative estimate yields the optimal
$L^\infty(0,T;L^2)$ rate, while the augmented relative-energy estimate gives
the necessary $H^2$ control. Combining the two by interpolation gives the
optimal $L^\infty(0,T;H^1)$ rate.

The paper is organized as follows. In \Cref{sec:prelim} we introduce the
the $H^2$-conforming spaces, the discrete Laplacian, and
the biharmonic Ritz projection. In \Cref{sec:continuous} we recall the
continuous gradient-flow structure. The semidiscrete scheme and the physical
energy defect are analysed in \Cref{sec:sd-defect}. The relative-energy
framework is developed in \Cref{sec:relative}. In \Cref{sec:error} we apply
the framework to the exact solution and derive semidiscrete error estimates.
In \Cref{sec:numerics} we present numerical experiments for Bell and Argyris
elements that confirm the predicted convergence rates and contrast the
energy defect of classical $C^1$ finite element spaces with that of a
Fourier reference, for which the defect vanishes up to roundoff. The key
technical tool used throughout the analysis, an $H^2$-stable elliptic
reconstruction bound for the inverse discrete Laplacian, is collected in
\Cref{app:reconstruction}.

\section{Preliminaries}\label{sec:prelim}

Throughout, all constants denoted by $C$ may change from line to line but are
independent of the mesh size $h$.

\subsection{Setting and notation}
Let $\Omega\subset\mathbb R^d$, $d \in \{2,3\}$, be a periodic box, identified with the flat
$d$-dimensional torus. Throughout we work in the periodic setting; all
functions and derivatives are understood to be periodic across opposite faces
of~$\Omega$.


We use $(\cdot,\cdot)$ for the $L^2(\Omega)$ inner product and
$\|\cdot\|$ for the induced norm. For $s\ge0$, $\|\cdot\|_{H^s}$ denotes the
usual Sobolev norm on $H^s_{\per}(\Omega)$. We define the mean-free spaces
$$
\begin{aligned}
\mathring L^2(\Omega)
&:=
\{v\in L^2(\Omega):(v,1)=0\}, \\
\mathring H^s_{\per}(\Omega)
&:=
\{v\in H^s_{\per}(\Omega):(v,1)=0\}.
\end{aligned}
$$
On mean-free periodic functions we use the standard equivalences
\begin{align}\label{eq:poincare-H1}
\|v\|_{H^1}\simeq \|\nabla v\|,
\qquad v\in \mathring H^1_{\per}(\Omega),
\\ \label{eq:poincare-H2}
\|v\|_{H^2}\simeq \|\Delta v\|,
\qquad v\in \mathring H^2_{\per}(\Omega).
\end{align}


\subsection{$H^2$-conforming finite element spaces}

Let $\mathcal T_h$ be a quasi-uniform, shape-regular triangulation or
quadrangulation of $\Omega$ compatible with periodicity. We consider a finite
element space
$$
V_h\subset C^1(\overline\Omega)\cap H^2_{\per}(\Omega)
$$
built from classical globally $C^1$ elements. Examples include:
\begin{itemize}[leftmargin=1.8em] \itemsep0em
  \item the Argyris element, see \cite{argyris-0} and
        \cite[Thm.~2.2.11]{ciarlet2002finite};
  \item the Bell element, see \cite{bell-0} and
        \cite[Thm.~2.2.12]{ciarlet2002finite};
  \item the Bogner--Fox--Schmit element, see
        \cite{bogner1965generation} and
        \cite[Thm.~2.2.14]{ciarlet2002finite}.
\end{itemize}
We set the mean-free finite element space
$$
\mathring V_h:=\{v_h\in V_h:(v_h,1)=0\},
$$
and denote by $\Pi_h^0:L^2(\Omega)\to\mathring V_h$ the $L^2$-orthogonal
projection onto $\mathring V_h$.
Further, the approximation order is denoted by $p\ge2$; in particular, for the above element
families, one has
$$
p=
\begin{cases}
5, & \text{Argyris},\\
4, & \text{Bell},\\
3, & \text{Bogner--Fox--Schmit}.
\end{cases}
$$

We use the standard inverse inequality
\begin{equation}\label{eq:inverse-H2-H1}
\|v_h\|_{H^2}\le C h^{-1}\|v_h\|_{H^1}
\qquad\forall v_h\in V_h.
\end{equation}
Further, we use the standard quasi-interpolation
operator
$\mathcal I_h\colon H^1_{\per}(\Omega)\to V_h$,
which is obtained by combining the local nodal interpolation associated with the
underlying $C^1$ element with averaging over local mesh patches, in the
spirit of the Scott--Zhang and Cl\'ement constructions: pointwise values of
derivatives, which are not well defined on $H^1$, are replaced by suitable
local averages, while $\mathcal I_h$ reproduces the polynomial space of
$V_h$ on each patch. For the relevant local approximation theory of the
underlying $C^1$ elements we refer to
\cite[Sec.~6.1]{brenner2008mathematical} and
\cite[Thm.~3.5.1]{ciarlet2002finite}; the $H^1$-stable Hermite variant
applicable to Argyris-, Bell-, and Bogner--Fox--Schmit-type elements is
constructed in \cite{girault2002hermite}. The resulting operator satisfies
the following bounds.

\begin{lemma}[Quasi-interpolant]\label{lem:interpolant}
The operator $\mathcal I_h$ satisfies, for all $\psi\in H^1_{\per}(\Omega)$,
\begin{align}
\|\mathcal I_h\psi\|_{H^1}
&\le C\|\psi\|_{H^1}, \label{eq:I-H1-stab}\\
\|\psi-\mathcal I_h\psi\|
&\le C h\|\psi\|_{H^1}, \label{eq:I-L2-H1}\\
\|\Delta\mathcal I_h\psi\|
&\le C h^{-1}\|\psi\|_{H^1}, \label{eq:I-lap-H1}
\end{align}
and, for all $v\in H^2_{\per}(\Omega)$,
\begin{align}
\|v-\mathcal I_h v\|_{H^1}
&\le C h\|v\|_{H^2}, \label{eq:I-H1-H2}\\
\|\mathcal I_h v\|_{H^2}
&\le C\|v\|_{H^2}. \label{eq:I-H2-stab}
\end{align}
\end{lemma}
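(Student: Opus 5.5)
The plan is to derive all five bounds from the local properties of the Scott--Zhang/Clément-type Hermite quasi-interpolant of \cite{girault2002hermite}, combined with the inverse inequality \eqref{eq:inverse-H2-H1} and a Bramble--Hilbert argument on element patches. On each element $K\in\mathcal T_h$ the value $\mathcal I_h\psi|_K$ depends only on $\psi|_{\omega_K}$, where $\omega_K$ is the patch of elements touching $K$. Two properties of $\mathcal I_h$ are used throughout. First, it reproduces polynomials of degree $\le1$ on $\omega_K$; the construction in fact reproduces the full local polynomial space, but we need only linear and, for \eqref{eq:I-H2-stab}, quadratic reproduction. Second, its local degrees of freedom are averages that satisfy the scaled local stability
\[
\|\mathcal I_h\psi\|_{L^2(K)}+h\,|\mathcal I_h\psi|_{H^1(K)}\le C\bigl(\|\psi\|_{L^2(\omega_K)}+h\,|\psi|_{H^1(\omega_K)}\bigr).
\]
This follows by mapping to the reference patch, using equivalence of norms on the finite-dimensional local space, and using shape regularity. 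The derivative degrees of freedom are defined through averages of $\nabla\psi$, or of $\psi$ against derivative-dual weights, so only $H^1$ is needed. We take this stability and the reproduction property as the content of \cite{girault2002hermite}.

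\textbf{$H^1$ bounds.} Let $\bar\psi_K$ be the mean of $\psi$ on $\omega_K$. Since constants are reproduced, $\psi-\mathcal I_h\psi=(\psi-\bar\psi_K)-\mathcal I_h(\psi-\bar\psi_K)$ on $K$. Applying the local stability and the Poincaré inequality on the patch, $\|\psi-\bar\psi_K\|_{L^2(\omega_K)}\le Ch|\psi|_{H^1(\omega_K)}$, gives
\[
\|\psi-\mathcal I_h\psi\|_{L^2(K)}+h\,|\psi-\mathcal I_h\psi|_{H^1(K)}\le Ch\,|\psi|_{H^1(\omega_K)}.
\]
Summing over $K$ with finite patch overlap (shape regularity) yields \eqref{eq:I-L2-H1} and also $|\mathcal I_h\psi|_{H^1}\le C\|\psi\|_{H^1}$. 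Combined with the $L^2$ bound this gives \eqref{eq:I-H1-stab}.

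\textbf{$H^2$ bounds.} For \eqref{eq:I-lap-H1}, use $\|\Delta v_h\|\le C\|v_h\|_{H^2}$ together with \eqref{eq:inverse-H2-H1} applied to $v_h=\mathcal I_h\psi$, and then \eqref{eq:I-H1-stab}. For $v\in H^2_{\per}$, replace $\bar\psi_K$ by a local averaged Taylor polynomial $q_K\in P_1$, so that $\|v-q_K\|_{H^m(\omega_K)}\le Ch^{2-m}|v|_{H^2(\omega_K)}$ for $m=0,1,2$. Linear reproduction then gives $|v-\mathcal I_hv|_{H^1(K)}\le Ch|v|_{H^2(\omega_K)}$, and summing yields \eqref{eq:I-H1-H2}. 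For \eqref{eq:I-H2-stab}, take $q_K\in P_2$ instead. On $K$,
\[
|\mathcal I_hv|_{H^2(K)}\le|q_K|_{H^2(K)}+|\mathcal I_h(v-q_K)|_{H^2(K)}.
\]
The first term is bounded by $C|v|_{H^2(\omega_K)}$. For the second, apply the local inverse estimate followed by the local stability:
\[
|\mathcal I_h(v-q_K)|_{H^2(K)}\le Ch^{-1}|\mathcal I_h(v-q_K)|_{H^1(K)}\le Ch^{-2}\bigl(\|v-q_K\|_{L^2(\omega_K)}+h\,|v-q_K|_{H^1(\omega_K)}\bigr).
\]
By the Taylor estimate for a $P_2$ Taylor polynomial, the right-hand side is $\le C|v|_{H^2(\omega_K)}$ (in fact the local errors are $O(h^3)$ and $O(h^2)$ times $|v|_{H^3}$ when $v\in H^3$, but the $H^2$ version suffices). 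Summing and adding the $L^2$ and $H^1$ parts finishes the proof.

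\textbf{Main obstacle.} The bounds themselves are routine once the properties of $\mathcal I_h$ are in hand; the real obstacle is verifying those properties for the specific elements. One must check that the Hermite averaging construction reproduces polynomials of degree at least $2$ while remaining $H^1$-stable and globally $C^1$. For Bell elements the normal-derivative constraint complicates reproduction, although $P_2\subset$ Bell holds. Periodicity must also be compatible with the patches. The plan is to take these facts directly from \cite{girault2002hermite} rather than reprove them.
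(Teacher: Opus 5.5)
Your proposal is correct and follows the paper's route. The paper gives no separate proof of this lemma; it relies on the Girault--Scott Hermite quasi-interpolant (locality, local stability, polynomial reproduction) and standard local approximation theory, which is exactly what you take as input before writing out the Bramble--Hilbert and inverse-estimate steps. One minor simplification: linear reproduction already suffices for \eqref{eq:I-H2-stab}, since with $q_K\in P_1$ one has $|q_K|_{H^2(K)}=0$, and your chain $|\mathcal I_h(v-q_K)|_{H^2(K)}\le Ch^{-2}(\|v-q_K\|_{L^2(\omega_K)}+h|v-q_K|_{H^1(\omega_K)})\le C|v|_{H^2(\omega_K)}$ applies unchanged.
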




\subsection{Discrete Laplacian and discrete negative norms}

We define the discrete Laplacian $\Delta_h:V_h\to V_h$ by
\begin{equation}\label{eq:Delta-h}
(\Delta_h u_h,v_h)=-(\nabla u_h,\nabla v_h)
\qquad\forall u_h,v_h\in V_h.
\end{equation}
Then $\Delta_h$ is self-adjoint in $L^2(\Omega)$ and maps
$\mathring V_h$ into $\mathring V_h$. Moreover, $-\Delta_h$ is positive
definite on $\mathring V_h$.
Further, for $g_h\in\mathring V_h$, we define its inverse
$(-\Delta_h)^{-1}g_h\in\mathring V_h$ by
\begin{equation}\label{eq:discrete-inverse}
(\nabla(-\Delta_h)^{-1}g_h,\nabla v_h)=(g_h,v_h)
\qquad\forall v_h\in\mathring V_h.
\end{equation}
The corresponding discrete $H^{-1}$ norm is then given by
\begin{equation}\label{eq:Hminus1h}
\|g_h\|_{H^{-1}_h}^2
:=
(g_h,(-\Delta_h)^{-1}g_h)
=
\|\nabla(-\Delta_h)^{-1}g_h\|^2.
\end{equation}
In the same manner, we also use the discrete $H^{-2}$ dual norm
\begin{equation}\label{eq:Hminus2h}
\|\rho_h\|_{H^{-2}_h}
:=
\sup_{0\ne v_h\in\mathring V_h}
\frac{(\rho_h,v_h)}{\|v_h\|_{H^2}},
\qquad \rho_h\in\mathring V_h.
\end{equation}

\subsection{Biharmonic Ritz projection}

For $v\in H^2_{\per}(\Omega)$ we define the biharmonic Ritz projection
$R_hv\in V_h$ by
\begin{equation}\label{eq:ritz}
\bigl(\Delta(v-R_hv),\Delta\psi_h\bigr)+(v-R_hv,\psi_h)=0
\qquad\forall \psi_h\in V_h.
\end{equation}


\begin{lemma}[Well-posedness and mass preservation]\label{lem:ritz-wp}
For every $v\in H^2_{\per}(\Omega)$ there exists a unique $R_hv\in V_h$
satisfying \eqref{eq:ritz}. Moreover, $R_h$ preserves the mean,
\begin{equation}\label{eq:ritz-mean}
\overline{R_hv}=\overline v.
\end{equation}
\end{lemma}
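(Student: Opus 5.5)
The plan is to read \eqref{eq:ritz} as a finite-dimensional variational problem and apply the Lax--Milgram lemma, or simply the invertibility of a symmetric positive definite system. We then obtain mean preservation by testing with the constant function.

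\emph{Existence and uniqueness.} Define the bilinear form $a(u,w):=(\Delta u,\Delta w)+(u,w)$ on $H^2_{\per}(\Omega)$. Then \eqref{eq:ritz} is equivalent to finding $R_hv\in V_h$ with
\[
a(R_hv,\psi_h)=a(v,\psi_h)\qquad\forall\psi_h\in V_h .
\]
The right-hand side is a bounded linear functional on $V_h$, because $V_h\subset H^2_{\per}(\Omega)$. The form $a$ is symmetric and bounded. It is also coercive on $H^2_{\per}(\Omega)$, since on the torus Fourier series give
\[
a(u,u)=\|\Delta u\|^2+\|u\|^2=\sum_k(|k|^4+1)|\hat u_k|^2\simeq\|u\|_{H^2}^2 .
\]
This is consistent with \eqref{eq:poincare-H2} combined with control of the mean through the $L^2$ term. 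Lax--Milgram on the closed (finite-dimensional) subspace $V_h$ then yields a unique $R_hv$. Equivalently, $a(u_h,u_h)=0$ forces $u_h=0$, so the Gram matrix of $a$ on $V_h$ is invertible.

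\emph{Mean preservation.} Periodic $C^1$ spaces built from Argyris, Bell or Bogner--Fox--Schmit elements contain the constants, since each local polynomial space contains $P_0$ and the global $C^1$ and periodicity constraints are satisfied by constants. We may therefore take $\psi_h\equiv1$ in \eqref{eq:ritz}. Since $\Delta 1=0$, this gives $(v-R_hv,1)=0$, i.e.\ $\overline{R_hv}=\overline v$.

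The only point requiring care is the justification that $1\in V_h$. This is where the zeroth-order term in \eqref{eq:ritz} matters: without it, $a$ would fail to be coercive on constants, and the mean would not be fixed. I expect no real obstacle beyond stating this explicitly.
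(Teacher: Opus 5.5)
Your proposal is correct and follows essentially the same route as the paper: you apply Lax--Milgram to the symmetric, coercive form $a(u,w)=(\Delta u,\Delta w)+(u,w)$ restricted to $V_h$, and then test \eqref{eq:ritz} with $\psi_h\equiv1$ to obtain mean preservation. Your Fourier-series argument for coercivity and your explicit check that the constants lie in $V_h$ spell out two points the paper simply asserts.
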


\begin{proof}
The bilinear form
$$
a(u,w):=(\Delta u,\Delta w)+(u,w)
$$
is symmetric, continuous on $H^2_{\per}(\Omega)$, and coercive there since
$\|u\|_{H^2}^2\simeq\|\Delta u\|^2+\|u\|^2$ on the torus. Its restriction to
$V_h$ is therefore coercive as well, and existence and uniqueness follow
from the Lax--Milgram lemma. To see \eqref{eq:ritz-mean}, take
$\psi_h\equiv1\in V_h$ in \eqref{eq:ritz}: since $\Delta1=0$, this yields
$(v-R_hv,1)=0$, i.e.\ $\overline{R_hv}=\overline v$.
\end{proof}

\begin{lemma}[Ritz approximation]\label{lem:ritz-error}
Let $v\in H^{p+1}(\Omega)$. Then
\begin{align}
\|v-R_hv\|_{H^2}
&\le C h^{p-1}\|v\|_{H^{p+1}}, \label{eq:ritz-H2}\\
\|v-R_hv\|_{H^1}
&\le C h^{p}\|v\|_{H^{p+1}}, \label{eq:ritz-H1}\\
\|v-R_hv\|
&\le C h^{p+1}\|v\|_{H^{p+1}}, \label{eq:ritz-L2}\\
\|v-R_hv\|_{H^{-1}}
&\le C h^{\min(p+2,\,2p-2)}\|v\|_{H^{p+1}}. \label{eq:ritz-Hm1}
\end{align}
\end{lemma}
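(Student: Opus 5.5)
The plan is the standard route: Céa's lemma for the $H^2$ bound, then Aubin--Nitsche duality for the weaker norms. Write $e:=v-R_hv$ and $a(u,w)=(\Delta u,\Delta w)+(u,w)$, which is coercive and continuous on $H^2_{\per}(\Omega)$ by the proof of \Cref{lem:ritz-wp}. Galerkin orthogonality $a(e,\psi_h)=0$ for all $\psi_h\in V_h$ gives
\[
\|e\|_{H^2}\le C\inf_{\chi_h\in V_h}\|v-\chi_h\|_{H^2}.
\]
Standard approximation for the $C^1$ element of order $p$ (nodal interpolation for $v\in H^{p+1}\subset C^2$, or a quasi-interpolant reproducing polynomials of degree $p$) gives $\inf_{\chi_h}\|v-\chi_h\|_{H^2}\le Ch^{p-1}\|v\|_{H^{p+1}}$. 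This proves \eqref{eq:ritz-H2}.

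For the lower norms I will use duality. For $s\in\{0,1,2\}$ ($H^0=L^2$, so $s=2$ is the $H^{-2}$ case), let $g\in H^{s}_{\per}$ and solve the adjoint problem $\Delta^2 w+w=g$ on the torus. Fourier analysis gives the clean shift $\|w\|_{H^{4+s}}\le C\|g\|_{H^{s}}$, since the symbol $|\xi|^4+1$ has no kernel. Then
\[
(e,g)=a(e,w)=a(e,w-\chi_h)\le C\|e\|_{H^2}\inf_{\chi_h\in V_h}\|w-\chi_h\|_{H^2}.
\]
The approximation is limited by the element order: $\inf_{\chi_h}\|w-\chi_h\|_{H^2}\le Ch^{\min(2+s,\,p-1)}\|w\|_{H^{\min(4+s,p+1)}}$. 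Taking the supremum over $\|g\|_{H^{s}}\le1$ bounds $\|e\|_{H^{-s}}$ by $Ch^{p-1+\min(2+s,p-1)}\|v\|_{H^{p+1}}$.

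\begin{itemize}
\item For $s=0$ this gives $L^2$ order $p-1+\min(2,p-1)$, which equals $p+1$ whenever $p\ge3$; this is \eqref{eq:ritz-L2} for the listed elements.
\item For $s=1$ the exponent is $p-1+\min(3,p-1)=\min(p+2,2p-2)$, exactly \eqref{eq:ritz-Hm1}.
\end{itemize}

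For the $H^1$ bound \eqref{eq:ritz-H1} I will interpolate, $\|e\|_{H^1}\le\|e\|^{1/2}\|e\|_{H^2}^{1/2}$, which gives order $\tfrac12\bigl((p+1)+(p-1)\bigr)=p$.

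The main obstacle is the saturation caveat. For $p=2$ the $L^2$ duality only yields $h^{2}$, so the stated $h^{p+1}$ would fail. I will note that all elements considered have $p\ge3$, or else restrict the lemma to $p\ge3$. The other point needing care is the approximation property of $V_h$ in $H^2$ for the specific elements: Bell elements reproduce only $P_4$ despite using quintics, which is why $p=4$ there. I will cite \cite{ciarlet2002finite} for these interpolation estimates rather than reprove them.
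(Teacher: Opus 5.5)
Your proof is correct and follows the paper's approach: C\'ea's lemma for the $H^2$ bound, and Aubin--Nitsche duality for $\Delta^2+I$ on the torus for the weaker norms. There are two minor differences in execution.

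First, you get the $H^1$ bound by interpolating between the $L^2$ and $H^2$ estimates. The paper instead applies duality directly, with dual data in $H^{-1}$ and the dual solution in $H^3$ approximated in $H^2$ at order $h$. The direct route gives $h^p$ already for $p\ge 2$. Yours inherits the $p\ge3$ restriction of the $L^2$ step, which costs nothing here. As you correctly observe, the $L^2$ claim $h^{p+1}$ itself requires $p\ge3$, since duality yields only $h^{p-1+\min(2,p-1)}$. The paper leaves this restriction implicit, although every listed element satisfies it.

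Second, your bookkeeping for the $H^{-1}$ estimate is the one that actually produces the stated exponent $\min(p+2,2p-2)$. You take dual data in $H^1$, a dual solution in $H^5$, and approximation order $h^{\min(3,p-1)}$. The paper's sentence instead uses data in $H^{-1}$, a solution in $H^3$, and order $h^{\min(p-1,2)}$. As written, that would give only $h^{p-1}\cdot h^{\min(p-1,2)}=h^{\min(p+1,2p-2)}$. Your version is the consistent justification of \eqref{eq:ritz-Hm1}.
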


\begin{proof}
The $H^2$ estimate follows from C\'ea's lemma applied to the coercive
bilinear form $a(\cdot,\cdot)$ together with standard approximation theory
for $C^1$ finite elements. The $L^2$ and $H^1$ estimates follow by
Aubin--Nitsche duality applied to the periodic operator $\Delta^2+I$, which
has full $H^4$ elliptic regularity on the torus. For the $H^{-1}$ estimate,
the Aubin--Nitsche argument with dual data in $H^{-1}$ yields a dual
solution of regularity $H^3$, which is approximated in $H^2$ at order
$h^{\min(p-1,\,2)}$. The resulting rate is therefore
$h^{p-1}\cdot h^{\min(p-1,2)}=h^{\min(p+2,\,2p-2)}$, which equals $h^{p+2}$
for $p\ge 4$ and $h^4$ for $p=3$.
\end{proof}

\section{Continuous problem}\label{sec:continuous}


We consider the constant-mobility Cahn--Hilliard equation
\begin{equation}\label{eq:CH}
\partial_t\phi=\Delta\bigl(f'(\phi)-\gamma\Delta\phi\bigr)
\qquad\text{in }\Omega\times(0,T],
\end{equation}
where $\gamma>0$ and $f$ is a double-well potential. The continuous Cahn--Hilliard energy is
\begin{equation}\label{eq:E-cont}
E(\phi)
:=
\int_\Omega
f(\phi)+\frac{\gamma}{2}|\nabla\phi|^2\,\dd x.
\end{equation}
Throughout the analysis we assume
\begin{equation}\label{eq:f-assumption}
f\in C^3(\mathbb R),
\qquad
\|f''\|_{L^\infty(\mathbb R)}\le f''_\infty,
\qquad
\|f'''\|_{L^\infty(\mathbb R)}\le f'''_\infty.
\end{equation}
This is satisfied, for example, by a standard globally truncated quartic
double-well potential. Under \eqref{eq:f-assumption},
\begin{align}
\|f'(u)-f'(v)\|
&\le f''_\infty\|u-v\|,
\label{eq:fprime-Lip}\\
\|f''(u)-f''(v)\|
&\le f'''_\infty\|u-v\|
\label{eq:fsecond-Lip}
\end{align}
whenever the expressions are well defined.

Let
$
\mu:=f'(\phi)-\gamma\Delta\phi.
$
The standard weak formulation is
\begin{equation}\label{eq:weak-H1}
(\partial_t\phi,v)+(\nabla\mu,\nabla v)=0
\qquad\forall v\in H^1_{\per}(\Omega).
\end{equation}
If $\phi$ is sufficiently smooth, then the equivalent single-field
$H^2$ formulation is
\begin{equation}\label{eq:weak-H2}
(\partial_t\phi,v)+\gamma(\Delta\phi,\Delta v)
-\bigl(f'(\phi),\Delta v\bigr)=0
\qquad\forall v\in H^2_{\per}(\Omega).
\end{equation}

\begin{theorem}[Continuous mass conservation and energy law]\label{thm:continuous-energy}
Let $\phi$ be a sufficiently smooth solution of \eqref{eq:weak-H1}. Then
$$
(\phi(t),1)=(\phi(0),1)
\qquad\forall t\in[0,T],
$$
and
\begin{equation}\label{eq:continuous-energy-law}
\frac{d}{dt}E(\phi(t))
=
-\|\partial_t\phi(t)\|_{H^{-1}}^2
\le0.
\end{equation}
\end{theorem}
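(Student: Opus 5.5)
Mass conservation comes first, by testing \eqref{eq:weak-H1} with $v\equiv1\in H^1_{\per}(\Omega)$. Since $\nabla 1=0$, this gives $(\partial_t\phi,1)=0$, hence $\frac{d}{dt}(\phi(t),1)=0$, and integrating in time yields $(\phi(t),1)=(\phi(0),1)$. As a by-product, $\partial_t\phi(t)$ is mean-free for every $t$. This is exactly what makes $\|\partial_t\phi\|_{H^{-1}}$ well defined: I will use the continuous mean-free analogue of \eqref{eq:Hminus1h}, namely $\|g\|_{H^{-1}}^2=(g,(-\Delta)^{-1}g)=\|\nabla(-\Delta)^{-1}g\|^2$ for $g\in\mathring L^2(\Omega)$, where $(-\Delta)^{-1}g$ is the unique mean-free periodic solution.

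For the energy law, I differentiate \eqref{eq:E-cont} under the integral sign, which the assumed smoothness and \eqref{eq:f-assumption} permit. Integrating the gradient term by parts on the torus, with no boundary terms by periodicity, gives
\[
\frac{d}{dt}E(\phi)=(f'(\phi),\partial_t\phi)+\gamma(\nabla\phi,\nabla\partial_t\phi)=(f'(\phi)-\gamma\Delta\phi,\partial_t\phi)=(\mu,\partial_t\phi).
\]

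The key step is to identify $\mu$ with the $H^{-1}$ Riesz representative of $\partial_t\phi$ up to a constant. Equation \eqref{eq:weak-H1} says $(\nabla\mu,\nabla v)=-(\partial_t\phi,v)$ for all $v$, so $-\Delta\mu=-\partial_t\phi$ in the weak sense. Hence $\mu-\overline\mu=-(-\Delta)^{-1}\partial_t\phi$. Because $\partial_t\phi$ is mean-free, $(\overline\mu,\partial_t\phi)=0$, and therefore
\[
(\mu,\partial_t\phi)=-(\partial_t\phi,(-\Delta)^{-1}\partial_t\phi)=-\|\partial_t\phi\|_{H^{-1}}^2\le0.
\]

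Equivalently, and more directly, I can test \eqref{eq:weak-H1} with $v=\mu$ to get $(\partial_t\phi,\mu)=-\|\nabla\mu\|^2$. I then note $\|\nabla\mu\|=\|\nabla(-\Delta)^{-1}\partial_t\phi\|=\|\partial_t\phi\|_{H^{-1}}$ by uniqueness of the mean-free solution of the Poisson problem on the torus.

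No step is a genuine obstacle. The only point needing care is the bookkeeping of constants: the mean of $\mu$ must be removed using the mass conservation established in the first step, so that the identification with the $H^{-1}$ norm is legitimate.
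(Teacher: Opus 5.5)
Your proposal is correct and follows essentially the same route as the paper: you test with $v\equiv1$ for mass conservation, compute $\frac{d}{dt}E(\phi)=(\mu,\partial_t\phi)$, and identify $\mu-\overline\mu$ with $-(-\Delta)^{-1}\partial_t\phi$, using that $\partial_t\phi$ is mean-free to drop $\overline\mu$. Your alternative of testing directly with $v=\mu$ to obtain $(\partial_t\phi,\mu)=-\|\nabla\mu\|^2$ is also valid and slightly shorter.
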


\begin{proof}
Choosing $v\equiv1$ in \eqref{eq:weak-H1} gives mass conservation. Since
$\partial_t\phi$ is mean-free, let $w\in\mathring H^1_{\per}(\Omega)$ solve
$$
(\nabla w,\nabla v)=(\partial_t\phi,v)
\qquad\forall v\in\mathring H^1_{\per}(\Omega).
$$
Then
$$
\|\partial_t\phi\|_{H^{-1}}^2=(\partial_t\phi,w).
$$
On the other hand, \eqref{eq:weak-H1} restricted to mean-free test functions
gives
$$
(\nabla(-\mu),\nabla v)=(\partial_t\phi,v)
\qquad\forall v\in\mathring H^1_{\per}(\Omega),
$$
and hence $w=-\mu+\overline\mu$, where $\overline\mu$ is the spatial mean of
$\mu$. Since $\partial_t\phi$ is mean-free,
$$
(\partial_t\phi,w)=-(\partial_t\phi,\mu)+\overline\mu(\partial_t\phi,1)
=-(\partial_t\phi,\mu).
$$
Thus
$$
\|\partial_t\phi\|_{H^{-1}}^2=-(\partial_t\phi,\mu).
$$
Differentiating \eqref{eq:E-cont} gives
$$
\frac{d}{dt}E(\phi)
=
(f'(\phi)-\gamma\Delta\phi,\partial_t\phi)
=
(\mu,\partial_t\phi),
$$
which proves \eqref{eq:continuous-energy-law}.
\end{proof}

\section{Semidiscrete scheme and physical energy defect}
\label{sec:sd-defect}

We discretize the single-field formulation \eqref{eq:weak-H2} directly in
the $H^2$-conforming space $V_h$.

\subsection{Semidiscrete Galerkin scheme}

Given $\phi_h(0)\in V_h$, find $\phi_h:[0,T]\to V_h$ such that
\begin{equation}\label{eq:sd}
(\partial_t\phi_h,v_h)+\gamma(\Delta\phi_h,\Delta v_h)
-\bigl(f'(\phi_h),\Delta v_h\bigr)=0
\qquad\forall v_h\in V_h
\end{equation}
for a.e.\ $t\in(0,T]$.

\begin{lemma}[Mass conservation]\label{lem:sd-mass}
The semidiscrete solution satisfies
$$
(\phi_h(t),1)=(\phi_h(0),1)
\qquad\forall t\in[0,T].
$$
In particular, $\partial_t\phi_h(t)\in\mathring V_h$.
\end{lemma}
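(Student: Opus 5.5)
The plan is to test \eqref{eq:sd} with the constant function $v_h\equiv1$, exactly as in the proof of \Cref{thm:continuous-energy} and of \eqref{eq:ritz-mean} in \Cref{lem:ritz-wp}.

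First we check that this test function is admissible. The spaces built from Argyris, Bell or Bogner--Fox--Schmit elements contain all globally constant functions, since the local polynomial spaces contain constants and constants are trivially $C^1$ and periodic. Hence $1\in V_h$; this fact was already used in \Cref{lem:ritz-wp}.

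With $v_h=1$ we have $\Delta v_h=0$. Both the term $\gamma(\Delta\phi_h,\Delta v_h)$ and the term $(f'(\phi_h),\Delta v_h)$ therefore vanish, and \eqref{eq:sd} reduces to $(\partial_t\phi_h,1)=0$ for a.e.\ $t$. Because $V_h$ is finite-dimensional, $\phi_h$ is an (absolutely) continuous curve in $V_h$, and $t\mapsto(\phi_h(t),1)$ is absolutely continuous with derivative $(\partial_t\phi_h,1)$. Integrating from $0$ to $t$ gives $(\phi_h(t),1)=(\phi_h(0),1)$ for all $t\in[0,T]$.

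For the second claim, note that $V_h$ is a linear space, so $\partial_t\phi_h(t)\in V_h$. It has zero mean by the identity above, and therefore $\partial_t\phi_h(t)\in\mathring V_h$.

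There is no genuine obstacle here. The only point needing care is the regularity of $\phi_h$ in time, which justifies differentiating the mass and applying the fundamental theorem of calculus. This follows from well-posedness of the ODE system, since $f'$ is Lipschitz by \eqref{eq:fprime-Lip}, which gives $\phi_h\in C^1([0,T];V_h)$.
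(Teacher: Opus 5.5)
Your proposal is correct and follows the paper's proof: test \eqref{eq:sd} with $v_h\equiv1$, use $\Delta1=0$ to get $(\partial_t\phi_h,1)=0$, and integrate in time. Your additional remarks, that $1\in V_h$ and that $\phi_h\in C^1([0,T];V_h)$ justifies the fundamental theorem of calculus, are valid details that the paper leaves implicit (the latter is its \Cref{rem:sd-exist}).
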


\begin{proof}
Choose $v_h\equiv1$ in \eqref{eq:sd}. Since $\Delta1=0$, the claim follows.
\end{proof}

\begin{remark}[Existence of the semidiscrete solution]\label{rem:sd-exist}
For any $\phi_h(0)\in V_h$, the scheme \eqref{eq:sd} is a system of
ordinary differential equations on the finite-dimensional space $V_h$ with
locally Lipschitz right-hand side, since $f\in C^3(\mathbb R)$ with
$f''\in L^\infty(\mathbb R)$ by \eqref{eq:f-assumption}. Standard ODE theory
yields a unique global solution $\phi_h\in C^1([0,T];V_h)$.
\end{remark}

The physical discrete energy is the continuous energy restricted to $V_h$:
\begin{equation}\label{eq:Eh}
E_h(\phi_h)
:=
\int_\Omega
f(\phi_h)+\frac{\gamma}{2}|\nabla\phi_h|^2\,\dd x.
\end{equation}

\subsection{Energy identity with defect}
To express the time derivative of $E_h$, we introduce the discrete chemical
potential
\begin{equation}\label{eq:qh-def}
q_h:=f'(\phi_h)-\gamma\Delta\phi_h,
\end{equation}
which is the natural variational derivative of $E_h$ at $\phi_h$. A direct
computation gives
\begin{equation}\label{eq:Eh-derivative}
\frac{d}{dt}E_h(\phi_h)=(q_h,\partial_t\phi_h).
\end{equation}
By \Cref{lem:sd-mass}, the time derivative $g_h:=\partial_t\phi_h$ lies in
$\mathring V_h$, so we can compare it with its discrete elliptic
reconstruction $w_h:=(-\Delta_h)^{-1}g_h\in\mathring V_h$. The mismatch
between $-\Delta w_h$ and $g_h$ is the physical reconstruction defect
\begin{equation}\label{eq:physical-rdef}
r_h:=\Delta w_h+g_h.
\end{equation}
At the continuous level, the analogous quantity vanishes identically; on
$V_h$ it is generally nonzero, and the energy defect $\mathcal R_h$ below
quantifies its contribution to the energy balance.

\begin{lemma}[Orthogonality of the physical defect]\label{lem:physical-rorth}
For a.e.\ $t\in(0,T]$, it holds
$$
(r_h,v_h)=0
\qquad\forall v_h\in\mathring V_h.
$$
Consequently, it gives
\begin{equation}\label{eq:physical-projection}
(q_h,r_h)=\bigl((I-\Pi_h^0)q_h,r_h\bigr).
\end{equation}
\end{lemma}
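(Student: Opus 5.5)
The plan is to test the definition of $r_h$ directly against mean-free discrete functions and integrate by parts once. Fix $t$ and $v_h\in\mathring V_h$. Since $w_h\in\mathring V_h\subset H^2_{\per}(\Omega)$ and $v_h\in H^1_{\per}(\Omega)$, periodic integration by parts (no boundary terms on the torus) gives $(\Delta w_h,v_h)=-(\nabla w_h,\nabla v_h)$. By the defining relation \eqref{eq:discrete-inverse} of $w_h=(-\Delta_h)^{-1}g_h$, which applies because $g_h=\partial_t\phi_h\in\mathring V_h$ by \Cref{lem:sd-mass}, we have $(\nabla w_h,\nabla v_h)=(g_h,v_h)$. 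Hence
$$
(r_h,v_h)=(\Delta w_h,v_h)+(g_h,v_h)=-(g_h,v_h)+(g_h,v_h)=0 .
$$
This is the first claim.

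For \eqref{eq:physical-projection}, I would split $q_h=\Pi_h^0q_h+(I-\Pi_h^0)q_h$. Since $\Pi_h^0q_h\in\mathring V_h$, the orthogonality just proved gives $(\Pi_h^0q_h,r_h)=0$, and subtracting this from $(q_h,r_h)$ yields the identity. Two well-definedness points need checking. First, $q_h=f'(\phi_h)-\gamma\Delta\phi_h\in L^2(\Omega)$, because $\phi_h\in H^2$ and $f'$ is globally Lipschitz by \eqref{eq:f-assumption}, so $\Pi_h^0q_h$ makes sense. Second, $r_h\in L^2(\Omega)$, because $\Delta w_h\in L^2$.

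The only subtle point is that the orthogonality holds only against $\mathring V_h$, not against all of $V_h$. In fact $r_h$ is itself mean-free: $(\Delta w_h,1)=0$ by periodicity and $(g_h,1)=0$, so $r_h$ is also orthogonal to constants. Combined with the first claim, this makes $r_h$ orthogonal to all of $V_h$. That stronger statement is not needed here, but it confirms consistency.

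I expect no real obstacle; the step needing most care is justifying that the integration by parts is legitimate for $C^1$ periodic finite element functions.
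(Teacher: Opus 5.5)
Your proof is correct and follows the paper's route. You integrate by parts against $v_h\in\mathring V_h$, use the defining relation of $(-\Delta_h)^{-1}$ with $g_h\in\mathring V_h$ from \Cref{lem:sd-mass}, and split $q_h=\Pi_h^0q_h+(I-\Pi_h^0)q_h$. You are also right to treat $r_h$ only as a mean-free element of $L^2(\Omega)$: the paper's proof writes $r_h\in\mathring V_h$, which together with the orthogonality would force $r_h=0$, and as you observe, mean-freeness is not even needed for the identity.
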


\begin{proof}
By definition of $w_h$, we deduce
$$
(\nabla w_h,\nabla v_h)=(g_h,v_h)
\qquad\forall v_h\in\mathring V_h.
$$
Periodic integration by parts gives
$$
-(\Delta w_h,v_h)=(g_h,v_h)
\qquad\forall v_h\in\mathring V_h,
$$
and hence $(\Delta w_h+g_h,v_h)=0$ for all $v_h\in\mathring V_h$. Note that
$\Delta w_h$ is mean-free by periodic integration by parts and $g_h$ is
mean-free by \Cref{lem:sd-mass}, so $r_h\in\mathring V_h\subset L^2(\Omega)$
is itself mean-free. Therefore, for any $u\in L^2(\Omega)$,
$$
(u,r_h)=\bigl((I-\Pi_h^0)u,r_h\bigr)+(\Pi_h^0 u,r_h)
=\bigl((I-\Pi_h^0)u,r_h\bigr),
$$
since $\Pi_h^0u\in\mathring V_h$ and $(\Pi_h^0u,r_h)=0$ by the orthogonality
just proved. Applied with $u=q_h$, this gives
\eqref{eq:physical-projection}.
\end{proof}

\begin{theorem}[Discrete energy identity with defect]\label{thm:physical-defect}
Let $\phi_h$ solve \eqref{eq:sd}. Then, for a.e.\ $t\in(0,T]$, it yields
\begin{equation}\label{eq:physical-energy-defect}
\frac{d}{dt}E_h(\phi_h(t))
=
-\|\partial_t\phi_h(t)\|_{H^{-1}_h}^2
+\mathcal R_h(t),
\qquad
\mathcal R_h(t):=(q_h(t),r_h(t)).
\end{equation}
Moreover, it holds
\begin{equation}\label{eq:physical-defect-bound}
|\mathcal R_h(t)|
\le
\|(I-\Pi_h^0)q_h(t)\|\,\|r_h(t)\|.
\end{equation}
\end{theorem}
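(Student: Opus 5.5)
Starting from \eqref{eq:Eh-derivative}, I would write $\frac{d}{dt}E_h(\phi_h)=(q_h,g_h)$ with $g_h=\partial_t\phi_h\in\mathring V_h$ (\Cref{lem:sd-mass}). Using the definition \eqref{eq:physical-rdef} of the reconstruction defect, substitute $g_h=r_h-\Delta w_h$:
\[
(q_h,g_h)=(q_h,r_h)-(q_h,\Delta w_h).
\]
The first term is by definition $\mathcal R_h$. The whole task is to identify the second term as $\|g_h\|_{H^{-1}_h}^2$.

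To evaluate $-(q_h,\Delta w_h)$, I would expand $q_h=f'(\phi_h)-\gamma\Delta\phi_h$ to get $-(f'(\phi_h),\Delta w_h)+\gamma(\Delta\phi_h,\Delta w_h)$. This is exactly the nonlinear and biharmonic part of the scheme \eqref{eq:sd} tested with $v_h=w_h\in V_h$, which is admissible since $w_h\in\mathring V_h\subset V_h$. The scheme therefore gives
\[
\gamma(\Delta\phi_h,\Delta w_h)-(f'(\phi_h),\Delta w_h)=-(g_h,w_h).
\]
By \eqref{eq:Hminus1h}, $(g_h,w_h)=(g_h,(-\Delta_h)^{-1}g_h)=\|g_h\|_{H^{-1}_h}^2$. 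Combining, $\frac{d}{dt}E_h(\phi_h)=\mathcal R_h-\|\partial_t\phi_h\|_{H^{-1}_h}^2$, which is \eqref{eq:physical-energy-defect}.

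For the bound \eqref{eq:physical-defect-bound}, I would invoke \eqref{eq:physical-projection} from \Cref{lem:physical-rorth}, which gives $\mathcal R_h=((I-\Pi_h^0)q_h,r_h)$, and then apply Cauchy--Schwarz. One point to check is that $q_h\in L^2$: $\Delta\phi_h\in L^2$ since $V_h\subset H^2_{\per}$, and $f'(\phi_h)$ is continuous.

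The only real obstacle is careful bookkeeping. The key step is testing with $w_h$ rather than with $\mu$-like quantities, since $q_h\notin V_h$ in general, which is precisely why the defect $r_h$ appears. I would also confirm that all manipulations hold for a.e.\ $t$, using the $C^1$-in-time regularity from \Cref{rem:sd-exist}, and that differentiating $\int f(\phi_h)$ under the integral is justified by $f\in C^3$ with bounded $f''$.
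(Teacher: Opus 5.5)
Your proposal is correct and follows essentially the same route as the paper. It tests \eqref{eq:sd} with $v_h=w_h$, uses $(g_h,w_h)=\|g_h\|_{H^{-1}_h}^2$ and $\Delta w_h=-g_h+r_h$ to identify $(q_h,\partial_t\phi_h)$, and obtains the bound from \eqref{eq:physical-projection} and Cauchy--Schwarz; the only difference is the immaterial order in which you substitute and test.
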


\begin{proof}
Testing \eqref{eq:sd} with $v_h=w_h$ yields
$$
(\partial_t\phi_h,w_h)+\gamma(\Delta\phi_h,\Delta w_h)
-\bigl(f'(\phi_h),\Delta w_h\bigr)=0.
$$
Using $q_h=f'(\phi_h)-\gamma\Delta\phi_h$, this becomes
$$
\|\partial_t\phi_h\|_{H^{-1}_h}^2-(q_h,\Delta w_h)=0.
$$
Since $\Delta w_h=-\partial_t\phi_h+r_h$, we find
$$
(q_h,\partial_t\phi_h)
=
-\|\partial_t\phi_h\|_{H^{-1}_h}^2+(q_h,r_h).
$$
Together with \eqref{eq:Eh-derivative}, this proves
\eqref{eq:physical-energy-defect}. The bound follows from
\eqref{eq:physical-projection} and Cauchy's inequality.
\end{proof}

\begin{remark}[Laplacian-invariant spaces]\label{rem:lap-invariant}
If $\Delta(\mathring V_h)\subset\mathring V_h$ and
$(-\Delta_h)^{-1}$ coincides with the restriction of the continuous inverse
Laplacian to $\mathring V_h$, then $r_h\equiv0$ and hence
$\mathcal R_h\equiv0$. This applies to Fourier or trigonometric polynomial
spaces on periodic domains. For classical globally $C^1$ finite element
spaces, $r_h$ is generally nonzero, and the term $\mathcal R_h$ measures the
failure of exact energy dissipation for the physical discrete energy.
\end{remark}

\section{Relative-energy stability framework}
\label{sec:relative}

We now develop the relative-energy framework that will be used for the error
analysis. For related relative-energy estimates for Cahn--Hilliard systems, we refer to \cite{brunk2023stability,brunk2025analysis}. 

\subsection{Perturbed semidiscrete trajectory}

Let $\phi_h$ solve \eqref{eq:sd}. We compare $\phi_h$ with a perturbed
trajectory $\widehat\phi_h:[0,T]\to V_h$ satisfying
\begin{equation}\label{eq:perturbed}
(\partial_t\widehat\phi_h,v_h)
+\gamma(\Delta\widehat\phi_h,\Delta v_h)
-\bigl(f'(\widehat\phi_h),\Delta v_h\bigr)
=
(\rho_h,v_h)
\qquad\forall v_h\in V_h,
\end{equation}
where $\rho_h(t)\in V_h$ is a residual.
We assume compatibility of means:
\begin{equation}\label{eq:mean-compatible}
(\phi_h(0)-\widehat\phi_h(0),1)=0,
\qquad
(\rho_h(t),1)=0
\quad\text{for a.e. }t\in(0,T].
\end{equation}
Then we define the difference
$
e_h:=\phi_h-\widehat\phi_h\in\mathring V_h
$ for any
$t\in[0,T]$.
Further, we introduce the abbreviations
$g_h:=\partial_t e_h$ and
$F_h:=f'(\phi_h)-f'(\widehat\phi_h)$.
Subtracting \eqref{eq:perturbed} from \eqref{eq:sd} gives
\begin{equation}\label{eq:error-equation}
(g_h,v_h)+\gamma(\Delta e_h,\Delta v_h)
-\bigl(F_h,\Delta v_h\bigr)
=
-(\rho_h,v_h)
\qquad\forall v_h\in V_h.
\end{equation}

\subsection{Relative energies}

The basic $L^2$ relative energy is denoted by
\begin{equation}\label{eq:L2-relative}
\mathcal E_0(\phi_h\mid\widehat\phi_h)
:=
\frac12\|e_h\|^2.
\end{equation}
For $\alpha>0$, we define the $H^1$ relative energy
\begin{equation}\label{eq:H1-relative}
\mathcal E_\alpha(\phi_h\mid\widehat\phi_h)
:=
\frac{\gamma}{2}\|\nabla e_h\|^2
+
\int_\Omega
\Bigl(
f(\phi_h)-f(\widehat\phi_h)-f'(\widehat\phi_h)e_h
\Bigr)\,\dd x
+
\frac{\alpha}{2}\|e_h\|^2.
\end{equation}

\begin{lemma}[Coercivity of $\mathcal E_\alpha$]\label{lem:Ealpha-coercive}
Assume \eqref{eq:f-assumption}. Then there exists $\alpha_0>0$ such that for
every $\alpha\ge\alpha_0$,
\begin{equation}\label{eq:Ealpha-coercive}
c\|e_h\|_{H^1}^2
\le
\mathcal E_\alpha(\phi_h\mid\widehat\phi_h)
\le
C\|e_h\|_{H^1}^2.
\end{equation}
\end{lemma}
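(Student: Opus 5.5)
The plan is to control the nonlinear middle term of $\mathcal E_\alpha$ pointwise by a second-order Taylor expansion. Then both sides of \eqref{eq:Ealpha-coercive} follow from the Poincaré-type equivalence \eqref{eq:poincare-H1} on mean-free functions.

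By \eqref{eq:mean-compatible} and \Cref{lem:sd-mass} (mass conservation holds for both $\phi_h$ and, by testing \eqref{eq:perturbed} with $v_h\equiv1$ and using $(\rho_h,1)=0$, also for $\widehat\phi_h$), we have $e_h(t)\in\mathring V_h$. Hence $\|e_h\|_{H^1}\simeq\|\nabla e_h\|$.

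For the nonlinear term, I would use Taylor's theorem with integral remainder:
$$
f(\phi_h)-f(\widehat\phi_h)-f'(\widehat\phi_h)e_h=e_h^2\int_0^1(1-s)f''(\widehat\phi_h+se_h)\dd s .
$$
By \eqref{eq:f-assumption} this integrand is pointwise bounded in absolute value by $\tfrac12 f''_\infty e_h^2$, so
$$
\Bigl|\int_\Omega f(\phi_h)-f(\widehat\phi_h)-f'(\widehat\phi_h)e_h\,\dd x\Bigr|\le \tfrac12 f''_\infty\|e_h\|^2 .
$$

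The upper bound is then immediate:
$$
\mathcal E_\alpha\le\tfrac\gamma2\|\nabla e_h\|^2+\tfrac12(f''_\infty+\alpha)\|e_h\|^2\le C\|e_h\|_{H^1}^2 .
$$
The constant $C$ depends on $\alpha$; we can either restrict to $\alpha$ in a bounded range $[\alpha_0,\alpha_1]$ or note that in later use $\alpha$ is a fixed number. For the lower bound, we get
$$
\mathcal E_\alpha\ge\tfrac\gamma2\|\nabla e_h\|^2+\tfrac12(\alpha-f''_\infty)\|e_h\|^2 .
$$
Choosing $\alpha_0:=f''_\infty+1$ (any $\alpha_0>f''_\infty$ works), the second term is nonnegative. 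This yields
$$
\mathcal E_\alpha\ge\tfrac12\min(\gamma,1)\|e_h\|_{H^1}^2,
$$
with a constant uniform in $\alpha\ge\alpha_0$, even without invoking Poincaré.

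The only delicate point is that the potential may be nonconvex (a double well), so the Taylor remainder can be negative. It is the global bound $\|f''\|_{L^\infty}\le f''_\infty$ that allows the $\frac\alpha2\|e_h\|^2$ term to absorb it. Without truncation of $f$, one would instead need $L^\infty$ bounds on $\phi_h,\widehat\phi_h$, but assumption \eqref{eq:f-assumption} avoids this.
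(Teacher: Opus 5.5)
Your proof is correct and takes the same approach as the paper: a second-order Taylor expansion bounds the Bregman term in absolute value by $\frac12 f''_\infty\|e_h\|^2$, which the $\frac{\alpha}{2}\|e_h\|^2$ term absorbs once $\alpha>f''_\infty$. Using the integral form of the remainder instead of the paper's pointwise Lagrange form with $\theta(x,t)$ is a harmless, slightly cleaner variant, and you are right that the lower bound does not need the Poincaré equivalence.
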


\begin{proof}
By Taylor's theorem,
$$
f(\phi_h)-f(\widehat\phi_h)-f'(\widehat\phi_h)e_h
=
\frac12 f''(\widehat\phi_h+\theta e_h)e_h^2
$$
for some $\theta=\theta(x,t)\in(0,1)$. Hence this so-called Bregman term is bounded
above and below by constants times $\|e_h\|^2$. Choosing
$\alpha>f''_\infty$ gives the lower bound; the upper bound is immediate.
\end{proof}

To close the relative-defect estimate, we introduce the higher-order
correction
\begin{equation}\label{eq:H-correction}
\mathcal H_h(\phi_h\mid\widehat\phi_h)
:=
\frac{\gamma}{2}\|\Delta e_h\|^2-(F_h,\Delta e_h)
\end{equation}
and the augmented relative energy
\begin{equation}\label{eq:augmented-energy}
\mathcal E_{\alpha,\beta}(\phi_h\mid\widehat\phi_h)
:=
\mathcal E_\alpha(\phi_h\mid\widehat\phi_h)
+\beta\,\mathcal H_h(\phi_h\mid\widehat\phi_h),
\qquad \beta>0.
\end{equation}

\begin{lemma}[Coercivity of the augmented energy]\label{lem:aug-coercive}
Let $\alpha\ge\alpha_0$. There exists $\beta_0>0$ such that for all
$\beta\in(0,\beta_0]$,
\begin{equation}\label{eq:aug-coercive}
c\Bigl(\|e_h\|_{H^1}^2+\beta\|\Delta e_h\|^2\Bigr)
\le
\mathcal E_{\alpha,\beta}(\phi_h\mid\widehat\phi_h)
\le
C\Bigl(\|e_h\|_{H^1}^2+\beta\|\Delta e_h\|^2\Bigr).
\end{equation}
The constants may depend on $\alpha,\beta,\gamma$, and $f''_\infty$, but not
on $h$.
\end{lemma}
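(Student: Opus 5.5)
The plan is to bound the correction $\mathcal H_h$ from above and below by $\|\Delta e_h\|^2$ up to a lower-order $\|e_h\|^2$ term, and then absorb that term into the coercivity of $\mathcal E_\alpha$ from \Cref{lem:Ealpha-coercive} by taking $\beta$ small.

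The only term without a sign is the cross term $(F_h,\Delta e_h)$. By the Lipschitz bound \eqref{eq:fprime-Lip}, $\|F_h\|\le f''_\infty\|e_h\|$. Young's inequality then gives
\begin{equation*}
|(F_h,\Delta e_h)|\le \frac{\gamma}{4}\|\Delta e_h\|^2+\frac{(f''_\infty)^2}{\gamma}\|e_h\|^2 .
\end{equation*}
Consequently
\begin{equation*}
\frac{\gamma}{4}\|\Delta e_h\|^2-\frac{(f''_\infty)^2}{\gamma}\|e_h\|^2
\le \mathcal H_h(\phi_h\mid\widehat\phi_h)
\le \frac{3\gamma}{4}\|\Delta e_h\|^2+\frac{(f''_\infty)^2}{\gamma}\|e_h\|^2 .
\end{equation*}

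For the lower bound, multiply the left inequality by $\beta$ and add the lower bound $c_0\|e_h\|_{H^1}^2\le\mathcal E_\alpha$ from \Cref{lem:Ealpha-coercive}. This gives
\begin{equation*}
\mathcal E_{\alpha,\beta}\ge \Bigl(c_0-\beta\frac{(f''_\infty)^2}{\gamma}\Bigr)\|e_h\|_{H^1}^2+\beta\frac{\gamma}{4}\|\Delta e_h\|^2 .
\end{equation*}
Here we used $\|e_h\|\le\|e_h\|_{H^1}$. Choose $\beta_0:=c_0\gamma/(2(f''_\infty)^2)$, or any $\beta_0>0$ if $f''_\infty=0$. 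Then for $\beta\le\beta_0$ the first coefficient is at least $c_0/2$, and the lower bound in \eqref{eq:aug-coercive} follows with $c=\min(c_0/2,\gamma/4)$.

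For the upper bound, add $\beta$ times the right inequality to the upper bound of \Cref{lem:Ealpha-coercive}. Since $\beta\le\beta_0$, the $\beta\|e_h\|^2$ contribution is bounded by a constant times $\|e_h\|_{H^1}^2$, which gives the claim with $C$ depending only on $\alpha,\gamma,f''_\infty$ and $\beta_0$.

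The argument uses only \eqref{eq:fprime-Lip} and \Cref{lem:Ealpha-coercive}, never any inverse inequality or property of $V_h$. So none of the constants depends on $h$.

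The only delicate point is the bookkeeping of the absorption. The negative $\|e_h\|^2$ contribution coming from $\mathcal H_h$ must be absorbed by the $H^1$ coercivity of $\mathcal E_\alpha$, not by $\|\Delta e_h\|^2$. This is why $\beta$, rather than $\alpha$, must be small, and why $\beta_0$ depends on the coercivity constant $c_0$, and hence on $\alpha$. We do not need the Poincaré equivalence \eqref{eq:poincare-H2} even though $e_h$ is mean-free. It would only be needed to identify $\|\Delta e_h\|$ with $\|e_h\|_{H^2}$ afterwards.
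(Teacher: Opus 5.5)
Your proof is correct and follows the paper's own argument. Like the paper, you use Young's inequality with $\|F_h\|\le f''_\infty\|e_h\|$ to get two-sided bounds on $\mathcal H_h$, then absorb the negative $\beta\|e_h\|^2$ contribution into the coercivity of $\mathcal E_\alpha$ by taking $\beta$ small; the difference is that you make $\beta_0$ and the constants explicit.
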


\begin{proof}
By \eqref{eq:fprime-Lip}, it holds
$$
\|F_h\|\le f''_\infty\|e_h\|.
$$
Thus, we deduce that
$$
|(F_h,\Delta e_h)|
\le
\frac{\gamma}{4}\|\Delta e_h\|^2+C\|e_h\|^2,
$$
from which we further conclude
$$
\frac{\gamma}{4}\|\Delta e_h\|^2-C\|e_h\|^2 \le
\mathcal H_h(\phi_h\mid\widehat\phi_h)
\le
C\bigl(\|\Delta e_h\|^2+\|e_h\|^2\bigr).
$$
Combining these estimates with \Cref{lem:Ealpha-coercive} and choosing
$\beta>0$ sufficiently small proves the claim.
\end{proof}

\subsection{Relative defect}

We recall the discrete elliptic
reconstruction
$
w_h:=(-\Delta_h)^{-1}g_h\in\mathring V_h.
$
Then we define the relative reconstruction defect 
\begin{equation}\label{eq:rrel-def}
r_h^{\rm rel}:=\Delta w_h+g_h
\end{equation}
and the relative energy defect
\begin{equation}\label{eq:Drel-def}
\mathcal D_h(\phi_h\mid\widehat\phi_h)
:=
\bigl(F_h-\gamma\Delta e_h,\ r_h^{\rm rel}\bigr).
\end{equation}
Furthermore, we set
\begin{equation}\label{eq:Taylor-term}
\mathcal T_h(\phi_h\mid\widehat\phi_h)
:=
\bigl(F_h-f''(\widehat\phi_h)e_h,\ \partial_t\widehat\phi_h\bigr).
\end{equation}

\begin{lemma}[Orthogonality of the relative reconstruction defect]\label{lem:rrel-orth}
For a.e.\ $t\in(0,T]$, it holds
$$
(r_h^{\rm rel},v_h)=0
\qquad\forall v_h\in\mathring V_h.
$$
Consequently, it yields
\begin{equation}\label{eq:Drel-proj}
\mathcal D_h(\phi_h\mid\widehat\phi_h)
=
\bigl((I-\Pi_h^0)(F_h-\gamma\Delta e_h),\ r_h^{\rm rel}\bigr).
\end{equation}
\end{lemma}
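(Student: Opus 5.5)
The argument mirrors the proof of \Cref{lem:physical-rorth}, with the physical time derivative replaced by $g_h=\partial_t e_h$. The first step is to check that $g_h\in\mathring V_h$. By the compatibility assumption \eqref{eq:mean-compatible}, testing \eqref{eq:error-equation} with $v_h\equiv1$ gives $(g_h,1)=-(\rho_h,1)=0$, since $\Delta1=0$. Hence $\frac{d}{dt}(e_h,1)=0$, and since $(e_h(0),1)=0$ we get $e_h(t)\in\mathring V_h$ and $g_h\in\mathring V_h$ for a.e.\ $t$. Therefore $w_h=(-\Delta_h)^{-1}g_h\in\mathring V_h$ is well defined by \eqref{eq:discrete-inverse}.

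Next, for any $v_h\in\mathring V_h$, the definition \eqref{eq:discrete-inverse} gives $(\nabla w_h,\nabla v_h)=(g_h,v_h)$. Since $w_h,v_h\in V_h\subset H^2_{\per}(\Omega)$, periodic integration by parts is legitimate and gives $(\nabla w_h,\nabla v_h)=-(\Delta w_h,v_h)$. Combining the two identities yields $(\Delta w_h+g_h,v_h)=0$, i.e.\ $(r_h^{\rm rel},v_h)=0$ for all $v_h\in\mathring V_h$.

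For \eqref{eq:Drel-proj}, set $u:=F_h-\gamma\Delta e_h\in L^2(\Omega)$. This is in $L^2$ because $f'$ is Lipschitz by \eqref{eq:f-assumption} and $e_h\in H^2$. Split $u=(I-\Pi_h^0)u+\Pi_h^0u$. Since $\Pi_h^0u\in\mathring V_h$, the orthogonality just proved gives $(\Pi_h^0u,r_h^{\rm rel})=0$, so $\mathcal D_h=(u,r_h^{\rm rel})=((I-\Pi_h^0)u,r_h^{\rm rel})$.

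Note that this step does not require $r_h^{\rm rel}\in\mathring V_h$. In general it need not be, since $\Delta w_h$ may leave $V_h$. Only the orthogonality against $\mathring V_h$ is used.

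The only point needing care is the mean-freeness of $g_h$. This is exactly where both compatibility conditions in \eqref{eq:mean-compatible} enter. Everything else is routine.
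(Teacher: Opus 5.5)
Your proof is correct and takes the same route as the paper: periodic integration by parts in the definition of $w_h=(-\Delta_h)^{-1}g_h$ gives the orthogonality, and splitting $F_h-\gamma\Delta e_h$ with $\Pi_h^0$ gives \eqref{eq:Drel-proj}; you also explicitly derive $(g_h,1)=0$ from \eqref{eq:mean-compatible}, where the paper simply takes $e_h\in\mathring V_h$ as given. Your side remark that $r_h^{\rm rel}$ generally does not lie in $V_h$ is right and more careful than the paper's write-up, which states $r_h^{\rm rel}\in\mathring V_h$ --- a slip, since together with the orthogonality this would force $r_h^{\rm rel}=0$ --- whereas the argument only uses orthogonality against $\mathring V_h$.
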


\begin{proof}
Since we have
$$
(\nabla w_h,\nabla v_h)=(g_h,v_h)
\qquad\forall v_h\in\mathring V_h,
$$
periodic integration by parts gives
$$
-(\Delta w_h,v_h)=(g_h,v_h)
\qquad\forall v_h\in\mathring V_h,
$$
so $(\Delta w_h+g_h,v_h)=0$ for all $v_h\in\mathring V_h$. Both $\Delta w_h$
(by periodic integration by parts) and $g_h=\partial_t e_h$ (since
$e_h\in\mathring V_h$ by \eqref{eq:mean-compatible}) are mean-free, hence
$r_h^{\rm rel}\in\mathring V_h$ is mean-free as well. Therefore
$\Pi_h^0(F_h-\gamma\Delta e_h)\in\mathring V_h$ is orthogonal to
$r_h^{\rm rel}$, and \eqref{eq:Drel-proj} follows.
\end{proof}

We prove the following lemma, giving structural properties for the $L^2$ and $H^1$ relative energies.

\begin{lemma}[Relative-energy identities]\label{lem:H1-identity}
For a.e.\ $t\in(0,T]$, it holds
\begin{equation}\label{eq:L2-identity}
\frac12\frac{d}{dt}\|e_h\|^2
+\gamma\|\Delta e_h\|^2
=
(F_h,\Delta e_h)-(\rho_h,e_h),
\end{equation}
and
\begin{align}
\frac{d}{dt}\mathcal E_\alpha(\phi_h\mid\widehat\phi_h)
&+
\|g_h\|_{H^{-1}_h}^2
+\alpha\gamma\|\Delta e_h\|^2 \notag\\
&=
\alpha(F_h,\Delta e_h)
-(\rho_h,w_h+\alpha e_h)
+\mathcal D_h(\phi_h\mid\widehat\phi_h)
+\mathcal T_h(\phi_h\mid\widehat\phi_h).
\label{eq:H1-identity}
\end{align}
\end{lemma}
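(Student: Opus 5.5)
The plan is to test the error equation \eqref{eq:error-equation} with two choices of test function, $v_h=e_h$ and $v_h=w_h$, and to combine the results with a direct computation of $\frac{d}{dt}\mathcal E_\alpha$. Both test functions are admissible because $e_h\in\mathring V_h\subset V_h$ by \eqref{eq:mean-compatible}, and $w_h=(-\Delta_h)^{-1}g_h\in\mathring V_h$ is well defined since $g_h=\partial_t e_h$ is mean-free.

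\emph{The $L^2$ identity.} Taking $v_h=e_h$ in \eqref{eq:error-equation} and using $(g_h,e_h)=\frac12\frac{d}{dt}\|e_h\|^2$ gives \eqref{eq:L2-identity} directly. Multiplying it by $\alpha$ yields
$$
\alpha(e_h,g_h)=-\alpha\gamma\|\Delta e_h\|^2+\alpha(F_h,\Delta e_h)-\alpha(\rho_h,e_h),
$$
which will supply the $\alpha$-terms in \eqref{eq:H1-identity}.

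\emph{Time derivative of $\mathcal E_\alpha$.} I will differentiate each term of \eqref{eq:H1-relative}. Periodic integration by parts gives $\gamma(\nabla e_h,\nabla g_h)=-\gamma(\Delta e_h,g_h)$. For the Bregman term, write $\partial_t\phi_h=g_h+\partial_t\widehat\phi_h$ and regroup. Its time derivative equals
$$
(f'(\phi_h),\partial_t\phi_h)-(f'(\widehat\phi_h),\partial_t\widehat\phi_h)-(f''(\widehat\phi_h)e_h,\partial_t\widehat\phi_h)-(f'(\widehat\phi_h),g_h),
$$
which equals $(F_h,g_h)+\mathcal T_h$. Hence
$$
\frac{d}{dt}\mathcal E_\alpha=(F_h-\gamma\Delta e_h,g_h)+\mathcal T_h+\alpha(e_h,g_h).
$$

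\emph{The term $(F_h-\gamma\Delta e_h,g_h)$.} This is the step carrying the structure, and the one where I expect the main difficulty. Testing \eqref{eq:error-equation} with $v_h=w_h$ gives:
\begin{itemize}
\item $(g_h,w_h)=\|g_h\|_{H^{-1}_h}^2$ by \eqref{eq:Hminus1h};
\item $\gamma(\Delta e_h,\Delta w_h)-(F_h,\Delta w_h)=-(F_h-\gamma\Delta e_h,\Delta w_h)$.
\end{itemize}
Substituting $\Delta w_h=-g_h+r_h^{\rm rel}$ from \eqref{eq:rrel-def} turns the second item into $(F_h-\gamma\Delta e_h,g_h)-\mathcal D_h$, using \eqref{eq:Drel-def}. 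Therefore
$$
(F_h-\gamma\Delta e_h,g_h)=-\|g_h\|_{H^{-1}_h}^2+\mathcal D_h-(\rho_h,w_h).
$$

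\emph{Conclusion.} Inserting this relation and the $\alpha$-scaled $L^2$ identity into the formula for $\frac{d}{dt}\mathcal E_\alpha$ gives \eqref{eq:H1-identity}. The main point of care is the sign bookkeeping in the substitution for $\Delta w_h$. The regularity needed for the time differentiation is harmless, since $\phi_h,\widehat\phi_h$ are $C^1$ in time with values in a finite-dimensional space and $f\in C^3$.
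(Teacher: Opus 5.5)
Your proposal is correct and follows the paper's proof essentially step for step. You use the same three ingredients: differentiating $\mathcal E_\alpha$ to get $(F_h-\gamma\Delta e_h,g_h)+\alpha(e_h,g_h)+\mathcal T_h$, testing with $v_h=w_h$ together with $\Delta w_h=-g_h+r_h^{\rm rel}$, and testing with $\alpha e_h$. One minor quibble: in the abstract framework $\widehat\phi_h$ need not be $C^1$ in time, since only $\rho_h(t)$ is prescribed for a.e.\ $t$. The chain rule should therefore be understood for absolutely continuous trajectories, which is exactly why the identity is stated for a.e.\ $t$.
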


\begin{proof}
Choosing $v_h=e_h$ in \eqref{eq:error-equation} gives \eqref{eq:L2-identity}.
Further, differentiating \eqref{eq:H1-relative} yields
$$
\frac{d}{dt}\mathcal E_\alpha(\phi_h\mid\widehat\phi_h)
=
(F_h-\gamma\Delta e_h,g_h)
+\alpha(e_h,g_h)
+\mathcal T_h(\phi_h\mid\widehat\phi_h).
$$
Testing \eqref{eq:error-equation} with $v_h=w_h$ gives
$$
(g_h,w_h)+\gamma(\Delta e_h,\Delta w_h)-(F_h,\Delta w_h)=-(\rho_h,w_h).
$$
Since $(g_h,w_h)=\|g_h\|_{H^{-1}_h}^2$ and
$\Delta w_h=-g_h+r_h^{\rm rel}$, we obtain
$$
(F_h-\gamma\Delta e_h,g_h)
=
-\|g_h\|_{H^{-1}_h}^2
-(\rho_h,w_h)
+\mathcal D_h(\phi_h\mid\widehat\phi_h).
$$
Lastly, we test \eqref{eq:error-equation} with $v_h=\alpha e_h$ and deduce
$$
\alpha(e_h,g_h)
=
-\alpha\gamma\|\Delta e_h\|^2
+\alpha(F_h,\Delta e_h)
-\alpha(\rho_h,e_h).
$$
Combining these identities proves \eqref{eq:H1-identity}.
\end{proof}

\begin{lemma}[Auxiliary identity]\label{lem:aux-identity}
For a.e.\ $t\in(0,T]$,
\begin{equation}\label{eq:aux-identity}
\frac{d}{dt}\mathcal H_h(\phi_h\mid\widehat\phi_h)+\|g_h\|^2
=
-(\rho_h,g_h)-(\partial_tF_h,\Delta e_h).
\end{equation}
\end{lemma}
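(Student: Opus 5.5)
We prove \eqref{eq:aux-identity} by differentiating $\mathcal H_h$ in time and then using the error equation \eqref{eq:error-equation} with the test function $v_h=g_h=\partial_t e_h$. This test function is admissible because $e_h\in C^1([0,T];V_h)$ (cf.\ \Cref{rem:sd-exist}), so $g_h(t)\in V_h$.

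First we differentiate \eqref{eq:H-correction}. Since $\Delta$ commutes with $\partial_t$ on the finite-dimensional space $V_h$, the product rule gives
\begin{equation*}
\frac{d}{dt}\mathcal H_h(\phi_h\mid\widehat\phi_h)
=\gamma(\Delta e_h,\Delta g_h)-(\partial_t F_h,\Delta e_h)-(F_h,\Delta g_h).
\end{equation*}
Here $\partial_tF_h=f''(\phi_h)\partial_t\phi_h-f''(\widehat\phi_h)\partial_t\widehat\phi_h$ is well defined and lies in $L^2$, since $f\in C^3$ and both trajectories are $C^1$ in time with values in $V_h\subset C^1(\overline\Omega)$.

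Next we test \eqref{eq:error-equation} with $v_h=g_h$:
\begin{equation*}
\|g_h\|^2+\gamma(\Delta e_h,\Delta g_h)-(F_h,\Delta g_h)=-(\rho_h,g_h).
\end{equation*}
Hence $\gamma(\Delta e_h,\Delta g_h)-(F_h,\Delta g_h)=-\|g_h\|^2-(\rho_h,g_h)$. Substituting this into the derivative above gives exactly
\begin{equation*}
\frac{d}{dt}\mathcal H_h+\|g_h\|^2=-(\rho_h,g_h)-(\partial_tF_h,\Delta e_h),
\end{equation*}
which is \eqref{eq:aux-identity}.

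There is no real obstacle. The only point needing care is the justification of differentiating $(F_h,\Delta e_h)$, i.e.\ the chain rule for the Nemytskii map $t\mapsto f'(\phi_h(t))$. This follows from $f\in C^2$ with bounded $f''$ together with $\phi_h,\widehat\phi_h\in C^1([0,T];V_h)$. For $\widehat\phi_h$ we assume the time regularity implicit in \eqref{eq:perturbed}, namely that $\partial_t\widehat\phi_h$ exists a.e.\ in $V_h$. Under that assumption the identity holds for a.e.\ $t$.
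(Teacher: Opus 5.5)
Your proposal is correct and follows the paper's proof. The paper likewise tests the error equation with $v_h=g_h$ and uses $(\Delta e_h,\Delta g_h)=\tfrac12\tfrac{d}{dt}\|\Delta e_h\|^2$ together with the product rule for $(F_h,\Delta e_h)$. You additionally make explicit the time-regularity assumption on $\widehat\phi_h$, which the paper leaves implicit.
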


\begin{proof}
Choose $v_h=g_h$ in \eqref{eq:error-equation}. Then
$$
\|g_h\|^2+\gamma(\Delta e_h,\Delta g_h)-(F_h,\Delta g_h)=-(\rho_h,g_h).
$$
Then using
$
(\Delta e_h,\Delta g_h)=\frac12\frac{d}{dt}\|\Delta e_h\|^2
$
and
$$
(F_h,\Delta g_h)=\frac{d}{dt}(F_h,\Delta e_h)-(\partial_tF_h,\Delta e_h),
$$
we deuce \eqref{eq:aux-identity}.
\end{proof}

The defect and remainder estimates below rely on a discrete elliptic
reconstruction bound for the inverse discrete Laplacian, stated as
\Cref{prop:reconstruction} in \Cref{app:reconstruction}.

\subsection{Defect and remainder estimates}

\begin{lemma}[Relative defect estimate]\label{lem:Drel-estimate}
Assume \eqref{eq:f-assumption}. Then, for every $\varepsilon>0$,
\begin{equation}\label{eq:Drel-estimate}
|\mathcal D_h(\phi_h\mid\widehat\phi_h)|
\le
\varepsilon\|g_h\|^2
+
C_\varepsilon
\left(
\mathcal E_\alpha(\phi_h\mid\widehat\phi_h)
+\|\Delta e_h\|^2
\right).
\end{equation}
\end{lemma}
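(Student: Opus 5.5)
By the orthogonality relation \eqref{eq:Drel-proj} from \Cref{lem:rrel-orth}, and then Cauchy--Schwarz, we have
$$
|\mathcal D_h(\phi_h\mid\widehat\phi_h)|
\le
\bigl\|(I-\Pi_h^0)(F_h-\gamma\Delta e_h)\bigr\|\,\|r_h^{\rm rel}\|
\le
\bigl(\|F_h\|+\gamma\|\Delta e_h\|\bigr)\,\|r_h^{\rm rel}\|,
$$
because $I-\Pi_h^0$ is a contraction in $L^2$. For the first factor, \eqref{eq:fprime-Lip} gives $\|F_h\|\le f''_\infty\|e_h\|$. By \Cref{lem:Ealpha-coercive}, $\|e_h\|^2\le C\,\mathcal E_\alpha(\phi_h\mid\widehat\phi_h)$. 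Hence the first factor squared is bounded by $C\bigl(\mathcal E_\alpha+\|\Delta e_h\|^2\bigr)$.

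The substantive step is the bound
$$
\|r_h^{\rm rel}\|\le C\|g_h\|.
$$
Since $r_h^{\rm rel}=\Delta w_h+g_h$, this reduces to showing $\|\Delta w_h\|\le C\|g_h\|$, where $w_h=(-\Delta_h)^{-1}g_h$. This is exactly what the discrete elliptic reconstruction estimate \Cref{prop:reconstruction} should provide.

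The argument behind that estimate runs as follows. Write $w\in\mathring H^2_{\per}$ for the continuous solution of $-\Delta w=g_h$. Then $w_h$ is the $H^1$ (Poisson) Ritz projection of $w$ onto $\mathring V_h$. Full periodic regularity gives $\|w\|_{H^2}\le C\|g_h\|$. Next, insert the quasi-interpolant:
$$
\|\Delta w_h\|\le\|\Delta(w_h-\mathcal I_hw)\|+\|\Delta\mathcal I_hw\|.
$$
Apply the inverse inequality \eqref{eq:inverse-H2-H1} to the discrete difference $w_h-\mathcal I_hw$. Then use the $H^1$ Ritz error $\|w-w_h\|_{H^1}\le Ch\|w\|_{H^2}$ (Céa's lemma together with \eqref{eq:I-H1-H2}), and the $H^2$-stability \eqref{eq:I-H2-stab}. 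Together these give $\|\Delta w_h\|\le C\|w\|_{H^2}\le C\|g_h\|$.

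This is where I expect the main obstacle. The estimate needs mean adjustment (for $\mathcal I_hw$ versus $\mathring V_h$), and quasi-uniformity is essential for the inverse inequality. I take these as settled by \Cref{prop:reconstruction}.

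It then remains to apply Young's inequality:
$$
|\mathcal D_h|\le C\bigl(\mathcal E_\alpha^{1/2}+\|\Delta e_h\|\bigr)\|g_h\|
\le \varepsilon\|g_h\|^2+C_\varepsilon\bigl(\mathcal E_\alpha+\|\Delta e_h\|^2\bigr),
$$
which is \eqref{eq:Drel-estimate}.
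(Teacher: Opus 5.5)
Your proposal is correct and follows essentially the same route as the paper: you use the projection identity from the orthogonality lemma, the $L^2$-contractivity of $I-\Pi_h^0$, the Lipschitz bound $\|F_h\|\le f''_\infty\|e_h\|$, the bound $\|r_h^{\rm rel}\|\le C\|g_h\|$ from the discrete elliptic reconstruction estimate, and then Young's inequality together with the coercivity of $\mathcal E_\alpha$. The only difference is cosmetic: you bound the combined term in one step rather than splitting it into the nonlinear part $D_1$ and the linear part $D_2$. Your sketch of the reconstruction estimate also matches the appendix argument (Poisson Ritz projection, quasi-interpolant, inverse inequality, and a mean-corrected C\'ea bound).
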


\begin{proof}
Using \eqref{eq:Drel-proj}, we split $\mathcal D_h$ into two parts as follows
$$
\mathcal D_h
=
\bigl((I-\Pi_h^0)F_h,r_h^{\rm rel}\bigr)
-
\gamma\bigl((I-\Pi_h^0)\Delta e_h,r_h^{\rm rel}\bigr)
=:D_1+D_2.
$$
For the nonlinear part $D_1$, we note that $I-\Pi_h^0$ is an $L^2$-orthogonal projection and
hence has operator norm at most one, so $\|(I-\Pi_h^0)F_h\|\le\|F_h\|$. The
Lipschitz bound \eqref{eq:fprime-Lip} on $f'$ gives
$$\|F_h\|
=
\|f'(\phi_h)-f'(\widehat\phi_h)\|
\le
\|f''\|_{L^\infty}\|\phi_h-\widehat\phi_h\|
=
C\|e_h\|,$$
so in total
$$
\|(I-\Pi_h^0)F_h\|\le \|F_h\|\le f''_\infty\|e_h\|.
$$
Further, since
$r_h^{\rm rel}=\Delta w_h+g_h$ and
$w_h=(-\Delta_h)^{-1}g_h$, \Cref{prop:reconstruction} implies $\|r_h^{\rm rel}\|\le C\|g_h\|$.
Thus, in total,
$$
|D_1|\le C\|e_h\|\|g_h\|
\le
\varepsilon\|g_h\|^2+C_\varepsilon\|e_h\|^2.
$$
For the linear part $D_2$, we deduce
$$
|D_2|
\le
C\|\Delta e_h\|\|g_h\|
\le
\varepsilon\|g_h\|^2+C_\varepsilon\|\Delta e_h\|^2.
$$
Finally, it holds $\|e_h\|^2\le C\mathcal E_\alpha(\phi_h\mid\widehat\phi_h)$ by
\Cref{lem:Ealpha-coercive}.
\end{proof}

\begin{lemma}[Taylor and commutator estimates]\label{lem:Taylor-comm}
Assume \eqref{eq:f-assumption} and
\begin{equation}\label{eq:comparison-bounds}
\widehat\phi_h\in L^\infty(0,T;H^2(\Omega)),
\qquad
\partial_t\widehat\phi_h\in L^\infty(0,T;H^1(\Omega)).
\end{equation}
Then, for every $\varepsilon>0$,
\begin{align}
|\mathcal T_h(\phi_h\mid\widehat\phi_h)|
&\le
C\mathcal E_\alpha(\phi_h\mid\widehat\phi_h), \label{eq:Taylor-est}\\
|(\partial_tF_h,\Delta e_h)|
&\le
\varepsilon\|g_h\|^2
+
C_\varepsilon
\left(
\mathcal E_\alpha(\phi_h\mid\widehat\phi_h)
+\|\Delta e_h\|^2
\right).
\label{eq:comm-est}
\end{align}
\end{lemma}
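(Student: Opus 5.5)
For the Taylor term \eqref{eq:Taylor-est} I plan to use a second-order Taylor expansion with integral remainder:
\[
F_h-f''(\widehat\phi_h)e_h=\int_0^1(1-s)f'''(\widehat\phi_h+se_h)\,\dd s\;e_h^2 ,
\]
so that pointwise $|F_h-f''(\widehat\phi_h)e_h|\le \tfrac12 f'''_\infty e_h^2$. Then
\[
|\mathcal T_h|\le C\int_\Omega e_h^2|\partial_t\widehat\phi_h|\dd x\le C\|e_h\|_{L^4}^2\|\partial_t\widehat\phi_h\|.
\]
Since $d\le3$, the embedding $H^1\hookrightarrow L^4$ gives $\|e_h\|_{L^4}^2\le C\|e_h\|_{H^1}^2$. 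Here $\|\partial_t\widehat\phi_h\|\le\|\partial_t\widehat\phi_h\|_{H^1}$ is bounded uniformly in time by \eqref{eq:comparison-bounds}. Coercivity (\Cref{lem:Ealpha-coercive}) then converts $\|e_h\|_{H^1}^2$ into $C\mathcal E_\alpha$. (One could use $L^3\times L^3$ with the $H^1$ bound on $\partial_t\widehat\phi_h$, but the $L^4$ route is enough.)

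For the commutator \eqref{eq:comm-est} I will differentiate $F_h$ in time:
\[
\partial_tF_h=f''(\phi_h)\partial_t\phi_h-f''(\widehat\phi_h)\partial_t\widehat\phi_h
=f''(\phi_h)g_h+\bigl(f''(\phi_h)-f''(\widehat\phi_h)\bigr)\partial_t\widehat\phi_h .
\]
The first piece is bounded by $f''_\infty\|g_h\|\|\Delta e_h\|\le\varepsilon\|g_h\|^2+C_\varepsilon\|\Delta e_h\|^2$ via Young's inequality.

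The second piece is the main obstacle. Pointwise we have $|f''(\phi_h)-f''(\widehat\phi_h)|\le f'''_\infty|e_h|$, so it is bounded by
\[
f'''_\infty\int_\Omega|e_h|\,|\partial_t\widehat\phi_h|\,|\Delta e_h|\dd x .
\]
I will apply Hölder with exponents $(4,4,2)$ to get $\|e_h\|_{L^4}\|\partial_t\widehat\phi_h\|_{L^4}\|\Delta e_h\|$. Both $L^4$ norms are controlled by $H^1$ norms in $d\le3$, and $\|\partial_t\widehat\phi_h\|_{H^1}\le C$ by \eqref{eq:comparison-bounds}. This gives the bound $C\|e_h\|_{H^1}\|\Delta e_h\|\le C(\mathcal E_\alpha+\|\Delta e_h\|^2)$, again using \Cref{lem:Ealpha-coercive}. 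Since this piece needs no $\varepsilon$, the $\varepsilon$-dependence enters only through the $g_h$ piece.

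The delicate point is that this step needs the $L^4$ control of $\partial_t\widehat\phi_h$ uniformly in $h$, which is exactly what the $H^1$ assumption supplies. The $H^2$ assumption on $\widehat\phi_h$ is not needed beyond ensuring the compositions are well defined; since $f''$ and $f'''$ are globally bounded, no $L^\infty$ bounds on $\phi_h$ are required. Summing both pieces yields \eqref{eq:comm-est}.
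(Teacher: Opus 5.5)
Your proposal is correct and follows essentially the same route as the paper: the pointwise Taylor bound $|F_h-f''(\widehat\phi_h)e_h|\le C|e_h|^2$ with $H^1\hookrightarrow L^4$ and \Cref{lem:Ealpha-coercive} for \eqref{eq:Taylor-est}, and the splitting $\partial_tF_h=f''(\phi_h)g_h+\bigl(f''(\phi_h)-f''(\widehat\phi_h)\bigr)\partial_t\widehat\phi_h$ with an $L^4\times L^4\times L^2$ H\"older bound and Young's inequality for \eqref{eq:comm-est}. Your write-up is also slightly more explicit than the paper's in showing where the uniform bound on $\|\partial_t\widehat\phi_h\|$ enters the Taylor term.
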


\begin{proof}
Taylor's theorem and \eqref{eq:f-assumption} imply
$$
|F_h-f''(\widehat\phi_h)e_h|\le C|e_h|^2.
$$
Using the embedding $H^1(\Omega)\hookrightarrow L^4(\Omega)$, we obtain
$$
\|F_h-f''(\widehat\phi_h)e_h\|
\le C\|e_h\|_{L^4}^2
\le C\|e_h\|_{H^1}^2.
$$
Hence, it yields \eqref{eq:Taylor-est} as follows
$$
|\mathcal T_h|
\le
C\|e_h\|_{H^1}^2
\le
C\mathcal E_\alpha(\phi_h\mid\widehat\phi_h).
$$

Next, we use the definition of $F_h$ to write
$$
\partial_tF_h
=
f''(\phi_h)g_h
+
\bigl(f''(\phi_h)-f''(\widehat\phi_h)\bigr)\partial_t\widehat\phi_h.
$$
Using \eqref{eq:f-assumption}, $H^1(\Omega)\hookrightarrow L^4(\Omega)$, and
\eqref{eq:comparison-bounds}, we deduce
$$
\|\partial_tF_h\|
\le
C\|g_h\|+C\|e_h\|_{L^4}\|\partial_t\widehat\phi_h\|_{L^4}
\le
C\|g_h\|+C\|e_h\|_{H^1}.
$$
Therefore, it yields
$$
|(\partial_tF_h,\Delta e_h)|
\le
C\|g_h\|\|\Delta e_h\|
+
C\|e_h\|_{H^1}\|\Delta e_h\|,
$$
and Young's inequality gives \eqref{eq:comm-est}.
\end{proof}

\subsection{Closed augmented stability estimate}

\begin{theorem}[Augmented relative-energy stability]\label{thm:aug-stability}
Assume \eqref{eq:f-assumption} and \eqref{eq:comparison-bounds}. Let
$\alpha\ge\alpha_0$ and choose a fixed $\beta\in(0,\beta_0]$. Then there exist constants $c_1,c_2,c_3,C>0$,
independent of $h$, such that
\begin{align}
\frac{d}{dt}\mathcal E_{\alpha,\beta}(\phi_h\mid\widehat\phi_h)
&+
c_1\|g_h\|_{H^{-1}_h}^2
+
c_2\|g_h\|^2
+
c_3\|\Delta e_h\|^2 \notag\\
&\le
C\mathcal E_{\alpha,\beta}(\phi_h\mid\widehat\phi_h)
+
C\|\rho_h\|_{H^{-1}}^2
+
C\|\rho_h\|^2.
\label{eq:aug-stability}
\end{align}
Consequently, it holds
\begin{align}
\mathcal E_{\alpha,\beta}(t)
&+
c_1\int_0^t\|g_h(s)\|_{H^{-1}_h}^2\,ds
+
c_2\int_0^t\|g_h(s)\|^2\,ds
+
c_3\int_0^t\|\Delta e_h(s)\|^2\,ds \notag\\
&\le
C_T\left(
\mathcal E_{\alpha,\beta}(0)
+
\int_0^t\|\rho_h(s)\|_{H^{-1}}^2\,ds
+
\int_0^t\|\rho_h(s)\|^2\,ds
\right).
\label{eq:aug-stability-int}
\end{align}
\end{theorem}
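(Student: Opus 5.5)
The plan is to add \eqref{eq:H1-identity} and $\beta$ times \eqref{eq:aux-identity}. This gives
\begin{align*}
\frac{d}{dt}\mathcal E_{\alpha,\beta}
+\|g_h\|_{H^{-1}_h}^2+\beta\|g_h\|^2+\alpha\gamma\|\Delta e_h\|^2
&=\alpha(F_h,\Delta e_h)-(\rho_h,w_h+\alpha e_h)-\beta(\rho_h,g_h)\\
&\quad+\mathcal D_h+\mathcal T_h-\beta(\partial_tF_h,\Delta e_h).
\end{align*}
Every term on the right is then estimated so that its contribution to the left is absorbed. The dissipation we keep is $\|g_h\|_{H^{-1}_h}^2$, the $\beta\|g_h\|^2$ term from the augmentation, and $\alpha\gamma\|\Delta e_h\|^2$. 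The bound must produce a multiple of $\mathcal E_{\alpha,\beta}$, which by \Cref{lem:aug-coercive} dominates $\|e_h\|_{H^1}^2+\beta\|\Delta e_h\|^2$.

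The individual terms are handled as follows.
\begin{itemize}[leftmargin=1.8em]\itemsep0em
\item For the first term, $\alpha|(F_h,\Delta e_h)|\le \alpha f''_\infty\|e_h\|\|\Delta e_h\|\le \frac{\alpha\gamma}{4}\|\Delta e_h\|^2+C\|e_h\|^2$.
\item For $\mathcal T_h$, \eqref{eq:Taylor-est} gives a bound $C\mathcal E_\alpha\le C\mathcal E_{\alpha,\beta}$.
\item For the residual terms, $\beta|(\rho_h,g_h)|\le\frac{\beta}{4}\|g_h\|^2+C\beta\|\rho_h\|^2$ and $\alpha|(\rho_h,e_h)|\le C\|\rho_h\|^2+C\|e_h\|^2$.
\item For $(\rho_h,w_h)$, since $w_h\in\mathring V_h\subset\mathring H^1$ and $\rho_h$ is mean-free, $|(\rho_h,w_h)|\le\|\rho_h\|_{H^{-1}}\|\nabla w_h\|=\|\rho_h\|_{H^{-1}}\|g_h\|_{H^{-1}_h}$ by \eqref{eq:Hminus1h}. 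Young then absorbs half into $\|g_h\|_{H^{-1}_h}^2$, leaving $C\|\rho_h\|_{H^{-1}}^2$.
\end{itemize}

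The delicate part is the balance between the two $\|g_h\|^2$-producing terms $\mathcal D_h$ and $\beta(\partial_tF_h,\Delta e_h)$ and the available $\beta\|g_h\|^2$. These terms also produce $\|\Delta e_h\|^2$ with large constants, while the only $\|\Delta e_h\|^2$ dissipation is $\alpha\gamma$. Applying \eqref{eq:comm-est} with $\varepsilon=1/4$ to the commutator gives $\frac{\beta}{4}\|g_h\|^2+C\beta(\mathcal E_\alpha+\|\Delta e_h\|^2)$, which is harmless because $\beta\|\Delta e_h\|^2\lesssim\mathcal E_{\alpha,\beta}$. For $\mathcal D_h$, apply \Cref{lem:Drel-estimate} with $\varepsilon=\beta/4$. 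The price is $C_{\beta}\|\Delta e_h\|^2$ with $C_\beta\sim 1/\beta$, and this cannot be absorbed into $\alpha\gamma\|\Delta e_h\|^2$ unless $\alpha$ is large relative to $1/\beta$. It is, however, admissible in the form $C_\beta\|\Delta e_h\|^2\le (C_\beta/\beta)\,\beta\|\Delta e_h\|^2\le C\mathcal E_{\alpha,\beta}$, since $\beta$ is fixed and the constants may depend on $\beta$.

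With this choice every $\|\Delta e_h\|^2$ contribution on the right is bounded by $C(\beta)\mathcal E_{\alpha,\beta}$. What survives on the left is at least $\frac12\|g_h\|_{H^{-1}_h}^2+\frac{\beta}{4}\|g_h\|^2+\frac{3\alpha\gamma}{4}\|\Delta e_h\|^2$, which is \eqref{eq:aug-stability} with $c_1=\tfrac12$, $c_2=\beta/4$, $c_3=3\alpha\gamma/4$. The remaining $\|e_h\|^2$ terms are bounded by $C\mathcal E_{\alpha,\beta}$ via \Cref{lem:aug-coercive}. I expect this bookkeeping around $\mathcal D_h$ to be the main obstacle: one has to check that the $1/\beta$-type constants are allowed to enter $C$ rather than being required to beat $\alpha\gamma$. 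If the statement were meant to hold with $C$ uniform in $\beta$, one would instead choose $\alpha$ large depending on $\beta$, which is compatible with $\alpha\ge\alpha_0$. One caveat: $\beta_0$ in \Cref{lem:aug-coercive} may depend on $\alpha$, so the order of choices must be kept consistent.

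Finally, \eqref{eq:aug-stability-int} follows by integrating \eqref{eq:aug-stability} over $(0,t)$ and applying Gronwall's lemma. This is valid because $\mathcal E_{\alpha,\beta}\ge0$ by \Cref{lem:aug-coercive}, and the dissipation terms are nonnegative and simply retained on the left, yielding $C_T=e^{CT}$ times the constant.
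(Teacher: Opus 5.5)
Your proposal is correct and follows the paper's route: add \eqref{eq:H1-identity} and $\beta$ times \eqref{eq:aux-identity}, bound each term with \Cref{lem:Drel-estimate,lem:Taylor-comm} and Young's inequality, and close with Gronwall. One step is handled more carefully than in the paper: you bound the $C_\varepsilon\|\Delta e_h\|^2$ term from $\mathcal D_h$ (with $\varepsilon\sim\beta$) by $C(\beta)\,\mathcal E_{\alpha,\beta}$, using the $\beta\|\Delta e_h\|^2$ part of \Cref{lem:aug-coercive}, which is the precise version of the paper's looser claim that this term is ``absorbed into the left-hand side''; that claim does not literally hold when $\alpha$ is fixed before $\beta$.
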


\begin{proof}
We fix the order of constants as follows: choose $\alpha\ge\alpha_0$ first
(this fixes the coercivity constants of $\mathcal E_\alpha$), then choose
$\beta\in(0,\beta_0(\alpha)]$ from \Cref{lem:aug-coercive} (this fixes the
coercivity constants of $\mathcal E_{\alpha,\beta}$), and finally choose
the absorption parameters $\varepsilon$ in
\Cref{lem:Drel-estimate,lem:Taylor-comm} small relative to $\beta$ and
$\alpha\gamma$.
Adding \eqref{eq:H1-identity} and $\beta$ times \eqref{eq:aux-identity}
gives
\begin{align*}
\frac{d}{dt}\mathcal E_{\alpha,\beta}
&+
\|g_h\|_{H^{-1}_h}^2
+
\beta\|g_h\|^2
+
\alpha\gamma\|\Delta e_h\|^2 \\
&=
\alpha(F_h,\Delta e_h)
-(\rho_h,w_h+\alpha e_h+\beta g_h)
+\mathcal D_h
+\mathcal T_h
-\beta(\partial_tF_h,\Delta e_h).
\end{align*}
The term $\alpha(F_h,\Delta e_h)$ is bounded by
$$
\alpha|(F_h,\Delta e_h)|
\le
\frac{\alpha\gamma}{4}\|\Delta e_h\|^2
+
C\|e_h\|^2.
$$
The residual terms satisfy
$$
\begin{aligned}
|(\rho_h,w_h)|
&\le
\|\rho_h\|_{H^{-1}}\|g_h\|_{H^{-1}_h}
\le
\frac14\|g_h\|_{H^{-1}_h}^2+C\|\rho_h\|_{H^{-1}}^2, \\
\alpha|(\rho_h,e_h)|
&\le
C\|\rho_h\|_{H^{-1}}\|e_h\|_{H^1}
\le
C\|\rho_h\|_{H^{-1}}^2+C\mathcal E_{\alpha,\beta}, \\
\beta|(\rho_h,g_h)|
&\le
\frac{\beta}{4}\|g_h\|^2+C\|\rho_h\|^2.
\end{aligned}
$$
Using \Cref{lem:Drel-estimate,lem:Taylor-comm}, with the order of constants
fixed above, the $\|g_h\|^2$ and $\|\Delta e_h\|^2$ terms can be absorbed
into the left-hand side. This proves \eqref{eq:aug-stability}. The
integrated estimate follows by Gronwall's lemma.
\end{proof}

\section{Application to the exact solution}
\label{sec:error}

We now apply the relative-energy framework to the exact solution. Let
$\phi$ be a sufficiently smooth solution of \eqref{eq:weak-H2}. We choose
$
\widehat\phi_h:=R_h\phi$ and 
$\eta_h:=\widehat\phi_h-\phi$.
By the mass-preservation property \eqref{eq:ritz-mean} of the Ritz
projection, $\eta_h$ is mean-free.
The residual associated with $\widehat\phi_h$ is defined by
\begin{equation}\label{eq:rho-def}
(\rho_h,v_h)
:=
(\partial_t\widehat\phi_h,v_h)
+\gamma(\Delta\widehat\phi_h,\Delta v_h)
-\bigl(f'(\widehat\phi_h),\Delta v_h\bigr)
\qquad\forall v_h\in V_h.
\end{equation}

\begin{lemma}[Residual representation]\label{lem:rho-repr}
For all $v_h\in V_h$, it holds
\begin{equation}\label{eq:rho-repr}
(\rho_h,v_h)
=
(\partial_t\eta_h,v_h)
-\gamma(\eta_h,v_h)
-
\bigl(f'(\widehat\phi_h)-f'(\phi),\Delta v_h\bigr).
\end{equation}
\end{lemma}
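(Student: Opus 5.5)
The plan is to subtract the exact weak formulation \eqref{eq:weak-H2} from the residual definition \eqref{eq:rho-def}, and then to use the Ritz orthogonality \eqref{eq:ritz} to remove the $\gamma(\Delta\eta_h,\Delta v_h)$ term. First, since $V_h\subset H^2_{\per}(\Omega)$, every $v_h\in V_h$ is an admissible test function in \eqref{eq:weak-H2}. Hence
$$
0=(\partial_t\phi,v_h)+\gamma(\Delta\phi,\Delta v_h)-\bigl(f'(\phi),\Delta v_h\bigr)
\qquad\forall v_h\in V_h.
$$
Subtracting this from \eqref{eq:rho-def} and writing $\widehat\phi_h-\phi=\eta_h$ gives
$$
(\rho_h,v_h)=(\partial_t\eta_h,v_h)+\gamma(\Delta\eta_h,\Delta v_h)-\bigl(f'(\widehat\phi_h)-f'(\phi),\Delta v_h\bigr).
$$
Here I use that $\partial_t R_h\phi=R_h\partial_t\phi$. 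This holds because $R_h$ is a linear, time-independent bounded operator and $\phi$ is assumed smooth in time, so $\partial_t\widehat\phi_h-\partial_t\phi=\partial_t\eta_h$.

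Second, I rewrite the biharmonic term by the defining relation \eqref{eq:ritz} with $v=\phi(t)$. Since $\eta_h=R_h\phi-\phi$, that relation reads
$$
(\Delta\eta_h,\Delta\psi_h)+(\eta_h,\psi_h)=0
\qquad\forall\psi_h\in V_h,
$$
so $\gamma(\Delta\eta_h,\Delta v_h)=-\gamma(\eta_h,v_h)$. Substituting this yields exactly \eqref{eq:rho-repr}.

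The argument is purely algebraic, so there is no real obstacle. The only points needing care are the sign convention for $\eta_h$ and the justification that every $v_h\in V_h$ may be used as a test function in the continuous equation. The latter is precisely conformity, $V_h\subset H^2_{\per}(\Omega)$.
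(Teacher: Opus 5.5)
Your proposal is correct and follows essentially the same route as the paper: subtract the exact weak formulation, which is valid for every $v_h\in V_h$ by conformity, from the residual definition. Then use the Ritz orthogonality with $v=\phi$ to replace $\gamma(\Delta\eta_h,\Delta v_h)$ by $-\gamma(\eta_h,v_h)$.
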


\begin{proof}
Since $\phi$ solves \eqref{eq:weak-H2},
$$
(\partial_t\phi,v_h)
+\gamma(\Delta\phi,\Delta v_h)
-\bigl(f'(\phi),\Delta v_h\bigr)=0.
$$
Subtracting this from \eqref{eq:rho-def} gives
$$
(\rho_h,v_h)
=
(\partial_t\eta_h,v_h)
+\gamma(\Delta\eta_h,\Delta v_h)
-\bigl(f'(\widehat\phi_h)-f'(\phi),\Delta v_h\bigr).
$$
The Ritz orthogonality \eqref{eq:ritz} applied to $v=\phi$ gives, for all
$v_h\in V_h$,
$$
(\Delta\eta_h,\Delta v_h)+(\eta_h,v_h)=0,
\qquad\text{i.e.\ }\quad
(\Delta\eta_h,\Delta v_h)=-(\eta_h,v_h).
$$
Substituting yields \eqref{eq:rho-repr}.
\end{proof}

\subsection{Residual bounds}

\begin{lemma}[Residual estimates]\label{lem:rho-estimates}
Assume \eqref{eq:f-assumption}. Then
\begin{align}
\|\rho_h\|_{H^{-2}_h}
&\le
C\left(
\|\partial_t\eta_h\|_{H^{-2}}
+\|\eta_h\|
\right), \label{eq:rho-Hm2}\\
\|\rho_h\|_{H^{-1}}
&\le
C\left(
\|\partial_t\eta_h\|_{H^{-1}}
+h\|\partial_t\eta_h\|
+h^{-1}\|\eta_h\|
\right), \label{eq:rho-Hm1}\\
\|\rho_h\|
&\le
C\left(
\|\partial_t\eta_h\|
+h^{-2}\|\eta_h\|
\right). \label{eq:rho-L2}
\end{align}
\end{lemma}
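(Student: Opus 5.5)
We would prove the three bounds directly from the residual representation \eqref{eq:rho-repr} of \Cref{lem:rho-repr}, in the order \eqref{eq:rho-L2}, \eqref{eq:rho-Hm2}, \eqref{eq:rho-Hm1}. The $L^2$ bound comes first because it is reused in the $H^{-1}$ argument. Throughout we use only the Lipschitz bound \eqref{eq:fprime-Lip}, which gives $\|f'(\widehat\phi_h)-f'(\phi)\|\le f''_\infty\|\eta_h\|$. This pointwise-in-time nonlinear estimate is the only place the nonlinearity enters, so the proof is essentially linear.

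\emph{$L^2$ bound.} We test \eqref{eq:rho-repr} with $v_h=\rho_h\in V_h$ and obtain
$$
\|\rho_h\|^2\le \|\partial_t\eta_h\|\,\|\rho_h\|+\gamma\|\eta_h\|\,\|\rho_h\|+f''_\infty\|\eta_h\|\,\|\Delta\rho_h\|.
$$
On the quasi-uniform mesh we then use the inverse estimate $\|\Delta\rho_h\|\le Ch^{-2}\|\rho_h\|$. This follows from \eqref{eq:inverse-H2-H1} combined with the standard $L^2$--$H^1$ inverse inequality; the latter is not displayed in the paper, so we would state it explicitly as a standard fact for the polynomial spaces in question. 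Dividing by $\|\rho_h\|$ and using $\|\eta_h\|\le h^{-2}\|\eta_h\|$ for $h\le1$ gives \eqref{eq:rho-L2}.

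\emph{$H^{-2}_h$ bound.} For $0\ne v_h\in\mathring V_h$, we bound the three terms of \eqref{eq:rho-repr} by $\|\partial_t\eta_h\|_{H^{-2}}\|v_h\|_{H^2}$, $\gamma\|\eta_h\|\,\|v_h\|$ and $f''_\infty\|\eta_h\|\,\|\Delta v_h\|$, respectively. Each of these is at most a constant times $\|v_h\|_{H^2}$. Dividing by $\|v_h\|_{H^2}$ and taking the supremum in \eqref{eq:Hminus2h} gives \eqref{eq:rho-Hm2}.

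\emph{$H^{-1}$ bound.} This is the only step needing care, because $\rho_h$ must be tested against a general $\psi\in H^1_{\per}(\Omega)$, and $\psi$ is not an admissible test function in \eqref{eq:rho-repr}. We split
$$
(\rho_h,\psi)=(\rho_h,\mathcal I_h\psi)+(\rho_h,\psi-\mathcal I_h\psi).
$$
The representation \eqref{eq:rho-repr} holds for every $v_h\in V_h$, so it applies to the first term with $v_h=\mathcal I_h\psi$, and no mean-free restriction is needed. We bound its three pieces as follows:
\begin{itemize}
\item the time-derivative piece by $\|\partial_t\eta_h\|_{H^{-1}}\|\mathcal I_h\psi\|_{H^1}$, using \eqref{eq:I-H1-stab};
\item the zeroth-order piece by $\gamma\|\eta_h\|\,\|\mathcal I_h\psi\|$;
\item the nonlinear piece by $f''_\infty\|\eta_h\|\,\|\Delta\mathcal I_h\psi\|\le Ch^{-1}\|\eta_h\|\,\|\psi\|_{H^1}$, using \eqref{eq:I-lap-H1}.
\end{itemize}
For the second term we combine \eqref{eq:I-L2-H1} with the already proved \eqref{eq:rho-L2}:
$$
|(\rho_h,\psi-\mathcal I_h\psi)|\le Ch\|\rho_h\|\,\|\psi\|_{H^1}\le C\bigl(h\|\partial_t\eta_h\|+h^{-1}\|\eta_h\|\bigr)\|\psi\|_{H^1}.
$$
Collecting terms, and absorbing $\|\eta_h\|$ into $h^{-1}\|\eta_h\|$, we divide by $\|\psi\|_{H^1}$ and take the supremum to obtain \eqref{eq:rho-Hm1}.

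The main obstacle is this interpolation-splitting step: one has to check that the loss $h^{-1}$ from \eqref{eq:I-lap-H1} and the gain $h$ from \eqref{eq:I-L2-H1} combine exactly into the stated powers $h\|\partial_t\eta_h\|$ and $h^{-1}\|\eta_h\|$. One must also make sure the inverse estimate used in the $L^2$ step is legitimately available.
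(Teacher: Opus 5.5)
Your proposal is correct and follows the paper's proof essentially step by step. You use the same three-term bound from \eqref{eq:rho-repr} with the Lipschitz estimate for the $H^{-2}_h$ norm, the same inverse estimate $\|\Delta v_h\|\le Ch^{-2}\|v_h\|$ for the $L^2$ bound, and the same splitting $(\rho_h,\psi)=(\rho_h,\mathcal I_h\psi)+(\rho_h,\psi-\mathcal I_h\psi)$ with \eqref{eq:I-H1-stab}, \eqref{eq:I-L2-H1}, \eqref{eq:I-lap-H1} and the already-proved $L^2$ bound for the $H^{-1}$ estimate. The remaining differences are cosmetic: you test directly with $v_h=\rho_h$ instead of taking a supremum, and you explicitly note that the $h^{-2}$ inverse estimate also needs the undisplayed $L^2$--$H^1$ inverse inequality.
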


\begin{proof}
First, we use \eqref{eq:rho-repr} and
\eqref{eq:fprime-Lip} as follows:
$$
\begin{aligned}
|(\rho_h,v_h)|
&\le
\|\partial_t\eta_h\|_{H^{-2}}\|v_h\|_{H^2}
+\gamma\|\eta_h\|\|v_h\|
+
C\|\eta_h\|\|\Delta v_h\|
\\ &\le
C\bigl(\|\partial_t\eta_h\|_{H^{-2}}+\|\eta_h\|\bigr)\|v_h\|_{H^2}.
\end{aligned}
$$
Taking the supremum over $v_h\in\mathring V_h$ proves \eqref{eq:rho-Hm2}.
For \eqref{eq:rho-L2}, since $\rho_h\in V_h$, we have
$$
\|\rho_h\|=
\sup_{0\ne v_h\in V_h}\frac{(\rho_h,v_h)}{\|v_h\|}.
$$
Using \eqref{eq:rho-repr} and the inverse estimate
$\|\Delta v_h\|\le Ch^{-2}\|v_h\|$ gives \eqref{eq:rho-L2}; the
$\gamma(\eta_h,v_h)$ term contributes at most $C\|\eta_h\|\|v_h\|$, which is dominated by the $h^{-2}\|\eta_h\|$ contribution in \eqref{eq:rho-L2}.

For \eqref{eq:rho-Hm1}, take arbitrary $\psi\in H^1_{\per}(\Omega)$ and
write
$$
(\rho_h,\psi)
=
(\rho_h,\psi-\mathcal I_h\psi)+(\rho_h,\mathcal I_h\psi).
$$
The first term is bounded by
$$
|(\rho_h,\psi-\mathcal I_h\psi)|
\le
\|\rho_h\|\|\psi-\mathcal I_h\psi\|
\le
Ch\|\rho_h\|\|\psi\|_{H^1}.
$$
For the second term, \eqref{eq:rho-repr} gives
$$
|(\rho_h,\mathcal I_h\psi)|
\le
\|\partial_t\eta_h\|_{H^{-1}}\|\mathcal I_h\psi\|_{H^1}
+\gamma\|\eta_h\|\|\mathcal I_h\psi\|
+
C\|\eta_h\|\|\Delta\mathcal I_h\psi\|.
$$
Using \eqref{eq:I-H1-stab} and \eqref{eq:I-lap-H1}, we deduce that
$$
|(\rho_h,\mathcal I_h\psi)|
\le
C\bigl(\|\partial_t\eta_h\|_{H^{-1}}+h^{-1}\|\eta_h\|\bigr)\|\psi\|_{H^1}.
$$
Combining this with \eqref{eq:rho-L2} proves \eqref{eq:rho-Hm1}.
\end{proof}

\begin{theorem}[Residual rates]\label{thm:rho-rates}
Assume
\begin{equation}\label{eq:regularity}
\phi\in L^\infty(0,T;H^{p+1}(\Omega)),
\qquad
\partial_t\phi\in L^2(0,T;H^{p+1}(\Omega))\cap L^\infty(0,T;H^2(\Omega)).
\end{equation}
Then
\begin{align}
\|\rho_h\|_{L^2(0,T;H^{-2}_h)}
&\le Ch^{p+1}, \label{eq:rho-rate-Hm2}\\
\|\rho_h\|_{L^2(0,T;H^{-1})}
&\le Ch^{p}, \label{eq:rho-rate-Hm1}\\
\|\rho_h\|_{L^2(0,T;L^2)}
&\le Ch^{p-1}. \label{eq:rho-rate-L2}
\end{align}
Moreover, $\widehat\phi_h$ satisfies the regularity stated in \eqref{eq:comparison-bounds}.
\end{theorem}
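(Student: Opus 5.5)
The plan is to insert the Ritz error bounds of \Cref{lem:ritz-error} into the residual estimates of \Cref{lem:rho-estimates}. The key observation is that $R_h$ is linear and time-independent. Hence $\partial_t\eta_h=R_h\partial_t\phi-\partial_t\phi$, so the time derivative of the projection error is again a Ritz projection error, now of $\partial_t\phi$. Under \eqref{eq:regularity}, \Cref{lem:ritz-error} then gives, for a.e.\ $t$,
\[
\|\eta_h\|\le Ch^{p+1}\|\phi\|_{H^{p+1}},\qquad
\|\partial_t\eta_h\|\le Ch^{p+1}\|\partial_t\phi\|_{H^{p+1}},\qquad
\|\partial_t\eta_h\|_{H^{-1}}\le Ch^{\min(p+2,2p-2)}\|\partial_t\phi\|_{H^{p+1}}.
\]
For the $H^{-2}$ norm we use $\|\cdot\|_{H^{-2}}\le\|\cdot\|_{H^{-1}}$. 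This bound suffices, since $\min(p+2,2p-2)\ge p+1$ for $p\ge3$. For $p=2$ we would instead invoke the $L^2$ bound directly, using $\|\cdot\|_{H^{-2}}\le\|\cdot\|$.

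Next we substitute these bounds into \eqref{eq:rho-Hm2}, \eqref{eq:rho-Hm1} and \eqref{eq:rho-L2}, square, and integrate over $(0,T)$. The $\eta_h$ terms are bounded in $L^\infty$ in time via $\phi\in L^\infty(0,T;H^{p+1})$, and the $\partial_t\eta_h$ terms in $L^2$ in time via $\partial_t\phi\in L^2(0,T;H^{p+1})$. This yields the three rates:
\begin{itemize}
\item for $H^{-2}_h$, $h^{p+1}+h^{p+1}$;
\item for $H^{-1}$, $h^{p+1}+h\cdot h^{p+1}+h^{-1}h^{p+1}\lesssim h^p$;
\item for $L^2$, $h^{p+1}+h^{-2}h^{p+1}=h^{p-1}$.
\end{itemize}
All constants depend only on the stated norms of $\phi$ and on $T$.

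Finally we verify \eqref{eq:comparison-bounds}. The $H^2$-stability of $R_h$ follows from coercivity of $a(\cdot,\cdot)$: $\|R_hv\|_{H^2}\le C\|v\|_{H^2}$. Hence $\widehat\phi_h=R_h\phi\in L^\infty(0,T;H^2)$, since $\phi\in L^\infty(0,T;H^{p+1})\subset L^\infty(0,T;H^2)$. Likewise $\partial_t\widehat\phi_h=R_h\partial_t\phi$ satisfies $\|\partial_t\widehat\phi_h\|_{H^1}\le\|R_h\partial_t\phi\|_{H^2}\le C\|\partial_t\phi\|_{H^2}$, which is in $L^\infty(0,T)$ by the assumption $\partial_t\phi\in L^\infty(0,T;H^2)$. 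This is precisely why that assumption appears in \eqref{eq:regularity}.

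The main (mild) obstacle is the interchange $\partial_tR_h\phi=R_h\partial_t\phi$, which requires the regularity in time to make sense in $H^2$. We would justify it via linearity and boundedness of $R_h$ on $H^2_{\per}$: $R_h$ commutes with difference quotients, and those converge in $H^2$. Otherwise everything is bookkeeping of exponents.
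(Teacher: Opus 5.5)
Your proposal is correct and is essentially the paper's proof: you commute $\partial_t$ with the linear, time-independent $R_h$, insert the Ritz bounds of \Cref{lem:ritz-error} into \Cref{lem:rho-estimates}, and integrate in time. The differences are cosmetic: the paper uses $\|\partial_t\eta_h\|_{H^{-2}}\le\|\partial_t\eta_h\|$ for every $p$, so your case split via the $H^{-1}$ rate is unnecessary; and you verify \eqref{eq:comparison-bounds} through the $H^2$-stability of $R_h$ rather than the paper's triangle inequality with the Ritz error, which if anything justifies the $\partial_t\widehat\phi_h$ bound more cleanly when only $\partial_t\phi\in L^\infty(0,T;H^2)$ is assumed.
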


\begin{proof}
Since $\widehat\phi_h=R_h\phi$, the Ritz estimates of
\Cref{lem:ritz-error} imply
$$
\|\eta_h\|_{L^\infty(0,T;L^2)}\le Ch^{p+1},
\qquad
\|\eta_h\|_{L^\infty(0,T;H^1)}\le Ch^p,
\qquad
\|\eta_h\|_{L^\infty(0,T;H^2)}\le Ch^{p-1}.
$$
Similarly, since $R_h$ commutes with $\partial_t$, it holds
$$
\|\partial_t\eta_h\|_{L^2(0,T;L^2)}\le Ch^{p+1},
\qquad
\|\partial_t\eta_h\|_{L^2(0,T;H^{-1})}\le Ch^{\min(p+2,\,2p-2)}.
$$
Since $\|\partial_t\eta_h\|_{H^{-2}}\le \|\partial_t\eta_h\|$, we also have
$$
\|\partial_t\eta_h\|_{L^2(0,T;H^{-2})}\le Ch^{p+1}.
$$
Inserting these bounds into \Cref{lem:rho-estimates} gives
\eqref{eq:rho-rate-Hm2}--\eqref{eq:rho-rate-L2}; note that the $H^{-1}$
residual rate $h^p$ is unaffected by the BFS-specific $h^4$ rate of
$\|\partial_t\eta_h\|_{L^2(H^{-1})}$, since the $h^{-1}\|\eta_h\|=h^p$
contribution dominates in all cases.

It remains to verify \eqref{eq:comparison-bounds}. By the triangle
inequality, it holds
$$
\|\widehat\phi_h\|_{L^\infty(0,T;H^2)}
\le
\|\eta_h\|_{L^\infty(0,T;H^2)}
+\|\phi\|_{L^\infty(0,T;H^2)}
\le C,
$$
using the regularity \eqref{eq:regularity}. The same argument applied to
$\partial_t\widehat\phi_h=R_h\partial_t\phi$ together with
$\partial_t\phi\in L^\infty(0,T;H^2)$ yields
$$
\|\partial_t\widehat\phi_h\|_{L^\infty(0,T;H^1)}\le C,
$$
proving \eqref{eq:comparison-bounds}.
\end{proof}

\subsection{Error estimates}

We denote the difference as before by
$
e_h:=\phi_h-\widehat\phi_h.
$
We choose the initial value
\begin{equation}\label{eq:initial}
\phi_h(0)=R_h\phi(0),
\end{equation}
so that it holds $e_h(0)=0$.

\begin{lemma}[Discrete $L^2$ estimate]\label{lem:L2-error}
Under the assumptions of \Cref{thm:rho-rates}, it gives
\begin{equation}\label{eq:L2-error-discrete}
\|e_h\|_{L^\infty(0,T;L^2)}
+
\|e_h\|_{L^2(0,T;H^2)}
\le
Ch^{p+1}.
\end{equation}
\end{lemma}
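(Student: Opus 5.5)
The plan is to work directly from the $L^2$ relative-energy identity \eqref{eq:L2-identity} of \Cref{lem:H1-identity}, with $\widehat\phi_h=R_h\phi$ and $e_h(0)=0$ by \eqref{eq:initial}. The nonlinear term is handled by \eqref{eq:fprime-Lip} and Young's inequality:
$$
|(F_h,\Delta e_h)|\le f''_\infty\|e_h\|\|\Delta e_h\|\le\frac{\gamma}{4}\|\Delta e_h\|^2+C\|e_h\|^2 .
$$
For the residual term we use the discrete $H^{-2}$ norm \eqref{eq:Hminus2h}. Since $e_h\in\mathring V_h$, we have $|(\rho_h,e_h)|\le\|\rho_h\|_{H^{-2}_h}\|e_h\|_{H^2}$. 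By \eqref{eq:poincare-H2} applied to the mean-free function $e_h$, this is bounded by $C\|\rho_h\|_{H^{-2}_h}\|\Delta e_h\|$, and hence by $\frac{\gamma}{4}\|\Delta e_h\|^2+C\|\rho_h\|_{H^{-2}_h}^2$.

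Collecting these bounds gives
$$
\frac{d}{dt}\|e_h\|^2+\gamma\|\Delta e_h\|^2\le C\|e_h\|^2+C\|\rho_h\|_{H^{-2}_h}^2 .
$$
Gronwall's lemma with $e_h(0)=0$ then yields
$$
\|e_h(t)\|^2+\gamma\int_0^t\|\Delta e_h\|^2\,ds\le C_T\|\rho_h\|_{L^2(0,T;H^{-2}_h)}^2 .
$$
By \eqref{eq:rho-rate-Hm2} of \Cref{thm:rho-rates}, the right-hand side is $O(h^{2(p+1)})$. Finally, \eqref{eq:poincare-H2} converts $\|\Delta e_h\|$ into $\|e_h\|_{H^2}$, which gives \eqref{eq:L2-error-discrete}.

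The only delicate point is the pairing of $\rho_h$ with $e_h$. The optimal rate depends on using the weakest available norm of $\rho_h$, namely $H^{-2}_h$, rather than $H^{-1}$ or $L^2$. This is legitimate because the identity controls $\|\Delta e_h\|$, which is equivalent to the full $H^2$ norm on mean-free functions. The supremum in \eqref{eq:Hminus2h} is taken only over $\mathring V_h$, so this step uses that $e_h$ is mean-free, which holds by \eqref{eq:ritz-mean} and \Cref{lem:sd-mass}. Once this is in place, the rest is routine absorption and Gronwall's lemma, and no reconstruction estimate is needed at this level.
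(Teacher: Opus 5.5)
Your proposal is correct and follows the paper's proof essentially line by line. Both arguments take the same steps: the $L^2$ identity \eqref{eq:L2-identity}, Young absorption of $(F_h,\Delta e_h)$ via \eqref{eq:fprime-Lip}, the pairing $|(\rho_h,e_h)|\le\|\rho_h\|_{H^{-2}_h}\|e_h\|_{H^2}$ absorbed into $\|\Delta e_h\|^2$, Gronwall with $e_h(0)=0$, the rate \eqref{eq:rho-rate-Hm2}, and finally \eqref{eq:poincare-H2}. You also check explicitly that $e_h$ is mean-free, which the supremum over $\mathring V_h$ and the norm equivalence require; the paper uses this only tacitly.
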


\begin{proof}
From \Cref{lem:H1-identity}, \eqref{eq:fprime-Lip}, and Young's inequality, we deduce
$$
\frac12\frac{d}{dt}\|e_h\|^2
+\frac{\gamma}{2}\|\Delta e_h\|^2
\le
C\|e_h\|^2
+C\|\rho_h\|_{H^{-2}_h}^2,
$$
where we used
$$
|(\rho_h,e_h)|
\le
\|\rho_h\|_{H^{-2}_h}\|e_h\|_{H^2}
\le
\frac{\gamma}{4}\|\Delta e_h\|^2
+C\|\rho_h\|_{H^{-2}_h}^2.
$$
Since $e_h(0)=0$, Gronwall's lemma and \eqref{eq:rho-rate-Hm2} imply
$$
\|e_h\|_{L^\infty(0,T;L^2)}^2
+
\int_0^T\|\Delta e_h\|^2\,dt
\le
Ch^{2p+2}.
$$
The equivalence \eqref{eq:poincare-H2} on mean-free functions gives
\eqref{eq:L2-error-discrete}.
\end{proof}

\begin{lemma}[Augmented relative-energy estimate]\label{lem:aug-error}
Under the assumptions of \Cref{thm:rho-rates},
\begin{equation}\label{eq:aug-error-discrete}
\|e_h\|_{L^\infty(0,T;H^2)}
+
\|\partial_t e_h\|_{L^2(0,T;H^{-1}_h)}
+
\|\partial_t e_h\|_{L^2(0,T;L^2)}
+
\|e_h\|_{L^2(0,T;H^2)}
\le
Ch^{p-1}.
\end{equation}
\end{lemma}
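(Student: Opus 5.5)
The plan is to apply the integrated augmented stability estimate \eqref{eq:aug-stability-int} of \Cref{thm:aug-stability} with $\widehat\phi_h=R_h\phi$ and the residual $\rho_h$ from \eqref{eq:rho-def}. First, the hypotheses must be checked. Assumption \eqref{eq:f-assumption} is standing. The comparison bounds \eqref{eq:comparison-bounds} for $\widehat\phi_h$ were verified at the end of \Cref{thm:rho-rates}. Mean compatibility \eqref{eq:mean-compatible} holds because $e_h(0)=0$ by \eqref{eq:initial}. For the residual, testing \eqref{eq:rho-repr} with $v_h\equiv1$ gives $(\rho_h,1)=(\partial_t\eta_h,1)-\gamma(\eta_h,1)=0$, since $\eta_h$ and $\partial_t\eta_h$ are mean-free by \eqref{eq:ritz-mean} ($R_h$ commutes with $\partial_t$).

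With $\alpha\ge\alpha_0$ and $\beta\in(0,\beta_0]$ fixed once and for all, $e_h(0)=0$ gives $\mathcal E_{\alpha,\beta}(0)=0$. The right-hand side of \eqref{eq:aug-stability-int} is then controlled by \eqref{eq:rho-rate-Hm1} and \eqref{eq:rho-rate-L2}:
$$
\int_0^T\|\rho_h\|_{H^{-1}}^2\,ds+\int_0^T\|\rho_h\|^2\,ds\le C\bigl(h^{2p}+h^{2p-2}\bigr)\le Ch^{2p-2},
$$
for $h\le1$. The $L^2$ residual is the bottleneck, and it is exactly what dictates the rate $h^{p-1}$ rather than $h^p$. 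Hence
$$
\sup_{t\le T}\mathcal E_{\alpha,\beta}(t)+c_1\|g_h\|_{L^2(H^{-1}_h)}^2+c_2\|g_h\|_{L^2(L^2)}^2+c_3\|\Delta e_h\|_{L^2(L^2)}^2\le C_Th^{2p-2}.
$$

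Next I convert this to the norms in \eqref{eq:aug-error-discrete}. The lower bound in \Cref{lem:aug-coercive} gives $\|e_h(t)\|_{H^1}^2+\beta\|\Delta e_h(t)\|^2\le C\mathcal E_{\alpha,\beta}(t)$. Since $\beta$ is a fixed $h$-independent constant, this yields $\sup_t\|\Delta e_h\|\le Ch^{p-1}$. Because $e_h\in\mathring V_h$, the equivalence \eqref{eq:poincare-H2} upgrades this to $\|e_h\|_{L^\infty(H^2)}\le Ch^{p-1}$; the same equivalence handles the $L^2(0,T;H^2)$ term. The remaining two terms, $\|g_h\|_{L^2(H^{-1}_h)}$ and $\|g_h\|_{L^2(L^2)}$ with $g_h=\partial_te_h$, are read off directly. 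Taking square roots and summing finishes the argument.

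I expect no deep obstacle, since \Cref{thm:aug-stability} does the heavy lifting. The only delicate points are bookkeeping. One is confirming that $\rho_h$ is mean-free, so that the abstract theorem applies. The other is ensuring the constants $c_1,c_2,c_3$ and the coercivity constants, which depend on the fixed $\beta$, are independent of $h$; this holds because $\beta$ is chosen before any $h$-dependence enters. The $L^\infty(H^2)$ bound degrades with $\beta^{-1/2}$, but with $\beta$ fixed this only affects the constant.
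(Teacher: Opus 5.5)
Your proposal is correct and follows the paper's proof essentially verbatim. Like the paper, you apply the integrated estimate \eqref{eq:aug-stability-int} with $\mathcal E_{\alpha,\beta}(0)=0$, insert the residual rates \eqref{eq:rho-rate-Hm1}--\eqref{eq:rho-rate-L2} (the $L^2$ residual dictates $h^{p-1}$), and convert via the coercivity of \Cref{lem:aug-coercive}; your explicit check that $(\rho_h,1)=0$, required by \eqref{eq:mean-compatible}, is a correct detail that the paper leaves implicit.
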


\begin{proof}
We use $e_h(0)=0$ again to deduce
$$
\mathcal E_{\alpha,\beta}(\phi_h(0)\mid\widehat\phi_h(0))=0.
$$
Using \Cref{thm:aug-stability} and the residual estimates
\eqref{eq:rho-rate-Hm1}--\eqref{eq:rho-rate-L2}, we obtain
$$
\mathcal E_{\alpha,\beta}(t)
+
\int_0^t\|\partial_t e_h\|_{H^{-1}_h}^2\,ds
+
\int_0^t\|\partial_t e_h\|^2\,ds
+
\int_0^t\|\Delta e_h\|^2\,ds
\le
C h^{2p-2}.
$$
The coercivity of the augmented energy, \Cref{lem:aug-coercive}, gives
$$
\|e_h\|_{L^\infty(0,T;H^1)}
+
\|e_h\|_{L^\infty(0,T;H^2)}
\le
Ch^{p-1},
$$
and the remaining terms follow from the integral estimate.
\end{proof}

\begin{theorem}[Semidiscrete a priori error estimates]\label{thm:main-error}
Assume \eqref{eq:f-assumption}, \eqref{eq:regularity}, and choose the initial
condition \eqref{eq:initial}. Then it yields
\begin{align}
\|\phi-\phi_h\|_{L^\infty(0,T;L^2)}
&\le Ch^{p+1}, \label{eq:main-L2}\\
\|\phi-\phi_h\|_{L^\infty(0,T;H^1)}
&\le Ch^{p}, \label{eq:main-H1}\\
\|\phi-\phi_h\|_{L^\infty(0,T;H^2)}
&\le Ch^{p-1}, \label{eq:main-H2}\\
\|\phi-\phi_h\|_{L^2(0,T;H^2)}
&\le Ch^{p-1}. \label{eq:main-L2H2}
\end{align}
Furthermore, it holds
\begin{equation}\label{eq:main-time-discrete}
\|\partial_t(\widehat\phi_h-\phi_h)\|_{L^2(0,T;H^{-1}_h)}
+
\|\partial_t(\widehat\phi_h-\phi_h)\|_{L^2(0,T;L^2)}
\le
Ch^{p-1}.
\end{equation}
\end{theorem}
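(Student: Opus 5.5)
The plan is to split the error as $\phi-\phi_h=-\eta_h-e_h$, where $\eta_h=R_h\phi-\phi$ and $e_h=\phi_h-R_h\phi$. Each piece is then bounded separately with the triangle inequality. The projection error $\eta_h$ is controlled pointwise in time by \Cref{lem:ritz-error} under \eqref{eq:regularity}: it gives $\|\eta_h\|_{L^\infty(L^2)}\le Ch^{p+1}$, $\|\eta_h\|_{L^\infty(H^1)}\le Ch^{p}$, and $\|\eta_h\|_{L^\infty(H^2)}\le Ch^{p-1}$, hence also $\|\eta_h\|_{L^2(H^2)}\le CT^{1/2}h^{p-1}$. The discrete part $e_h$ is handled by the two already established lemmas.

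For \eqref{eq:main-L2}, \Cref{lem:L2-error} gives $\|e_h\|_{L^\infty(L^2)}\le Ch^{p+1}$ directly. For \eqref{eq:main-H2} and \eqref{eq:main-L2H2}, \Cref{lem:aug-error} gives $\|e_h\|_{L^\infty(H^2)}+\|e_h\|_{L^2(H^2)}\le Ch^{p-1}$, again directly. The time-derivative bound \eqref{eq:main-time-discrete} is exactly the statement $\|\partial_t e_h\|_{L^2(H^{-1}_h)}+\|\partial_t e_h\|_{L^2(L^2)}\le Ch^{p-1}$ from \Cref{lem:aug-error}, since $\widehat\phi_h-\phi_h=-e_h$.

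The only genuinely new step is \eqref{eq:main-H1}, where neither lemma gives $h^p$ for $e_h$ on its own. I would interpolate between them. Since $e_h$ is mean-free and periodic, integrating by parts gives
\[
\|\nabla e_h\|^2=-(e_h,\Delta e_h)\le\|e_h\|\,\|\Delta e_h\|
\]
for every $t$. Inserting the bounds of \Cref{lem:L2-error} and \Cref{lem:aug-error} pointwise in $t$ yields $\|\nabla e_h(t)\|^2\le Ch^{p+1}h^{p-1}=Ch^{2p}$. By \eqref{eq:poincare-H1} this gives $\|e_h\|_{L^\infty(H^1)}\le Ch^p$, and combining with the $\eta_h$ bound proves \eqref{eq:main-H1}.

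The main obstacle is therefore this interpolation. It only works because \Cref{lem:aug-error} supplies an $L^\infty$-in-time $H^2$ bound, and not merely an $L^2$-in-time one. I would check that the coercivity in \Cref{lem:aug-coercive} with fixed $\beta>0$ indeed gives $\sup_t\|\Delta e_h\|\le Ch^{p-1}$, with a constant depending on $\beta$ but not on $h$. Everything else reduces to the triangle inequality together with the constants from \Cref{thm:aug-stability} and \Cref{thm:rho-rates}.
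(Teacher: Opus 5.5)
Your proposal is correct and follows the paper's proof essentially verbatim: the same splitting $\phi-\phi_h=-\eta_h-e_h$, the Ritz bounds of \Cref{lem:ritz-error} for $\eta_h$, \Cref{lem:L2-error,lem:aug-error} for $e_h$, and an $L^2$--$H^2$ interpolation for the optimal $H^1$ rate. The only cosmetic difference is that the paper first combines the $L^\infty(L^2)$ and $L^\infty(H^2)$ bounds and then interpolates the full error, $\|\phi-\phi_h\|_{H^1}^2\le C\|\phi-\phi_h\|\,\|\phi-\phi_h\|_{H^2}$. You instead interpolate only $e_h$ via $\|\nabla e_h\|^2=-(e_h,\Delta e_h)$ and add the Ritz $H^1$ estimate for $\eta_h$; both routes give $h^p$.
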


\begin{proof}
We split
$$
\phi-\phi_h
=
(\phi-\widehat\phi_h)+(\widehat\phi_h-\phi_h)
=
-\eta_h-e_h.
$$
The projection estimates give
$$
\|\eta_h\|_{L^\infty(0,T;L^2)}\le Ch^{p+1},
\qquad
\|\eta_h\|_{L^\infty(0,T;H^1)}\le Ch^p,
\qquad
\|\eta_h\|_{L^\infty(0,T;H^2)}\le Ch^{p-1}.
$$
Together with \Cref{lem:L2-error,lem:aug-error}, this yields
\eqref{eq:main-L2}, \eqref{eq:main-H2}, and \eqref{eq:main-L2H2}.

It remains to prove the $H^1$ estimate with the optimal order. Since
$\phi-\phi_h$ is mean-free, the interpolation inequality
$$
\|v\|_{H^1}^2\le C\|v\|\,\|v\|_{H^2},
\qquad v\in\mathring H^2_{\per}(\Omega),
$$
gives
$$
\|\phi-\phi_h\|_{L^\infty(0,T;H^1)}^2
\le
C
\|\phi-\phi_h\|_{L^\infty(0,T;L^2)}
\|\phi-\phi_h\|_{L^\infty(0,T;H^2)}
\le
C h^{p+1}h^{p-1}
=
C h^{2p}.
$$
This proves \eqref{eq:main-H1}. Finally,
\eqref{eq:main-time-discrete} is exactly the time-derivative part of
\Cref{lem:aug-error}.
\end{proof}

\begin{remark}[On the role of the higher-order estimates]
For constant mobility, the $H^1$ and $H^2$ error estimates of
\Cref{thm:main-error} are conditional in the sense that they require the
augmented relative-energy framework, while the optimal $L^\infty(0,T;L^2)$
rate already follows from the basic $L^2$ relative-energy estimate of
\Cref{lem:L2-error}. In the non-constant-mobility case the $L^2$
estimate becomes substantially more involved, whereas the higher-order
estimates obtained via the augmented relative energy carry over with
essentially no change.
\end{remark}

\begin{remark}[Comparison with mixed formulations]
It is sometimes argued in the literature that single-field
$H^2$-conforming discretizations are preferable to mixed
$H^1$-conforming formulations for the Cahn--Hilliard equation. The
present analysis points in the opposite direction at the level of
energy stability: an exact discrete dissipation law is, in general,
\emph{not} available for $H^2$-conforming spaces, due to the energy
defect $\mathcal R_h$ identified in \Cref{thm:physical-defect}. By
contrast, energy stability is straightforward in the mixed setting.
\end{remark}

\section{Numerical experiments}
\label{sec:numerics}

We now illustrate the convergence behaviour and the energy-defect mechanism
for the double-well potential
$
  f(\phi)=\frac14\phi^2(1-\phi)^2.
$
All computations are performed on the periodic unit square
\(\Omega=(0,1)^2\). In order to separate the spatial energy defect from a
possible time-discretization defect, we use the averaged-vector-field
treatment of the bulk potential together with a midpoint treatment of the
interfacial term. Thus the fully discrete scheme reads
$$
\frac{1}{\tau}(\phi_h^{n+1}-\phi_h^n,v_h)
+
\gamma\left(\Delta\frac{\phi_h^{n+1}+\phi_h^n}{2},\Delta v_h\right)
-
\bigl(f'_{\rm avf}(\phi_h^{n+1},\phi_h^n),\Delta v_h\bigr)
=0 ,
$$
where  \vspace{-.2cm}
$$
f'_{\rm avf}(b,a)
=
\int_0^1 f'(a+\theta(b-a))\,d\theta
=
\frac14(a^3+a^2b+ab^2+b^3)
-\frac12(a^2+ab+b^2)
+\frac14(a+b).
$$

\subsection{Manufactured-solution convergence}

We first verify the spatial convergence rates using a smooth manufactured
periodic solution. The forcing is chosen at the fully discrete level, so that
the exact smooth reference function satisfies the AVF/midpoint time
discretization. This allows us to use a fixed time step and isolate the
spatial discretization error. We take \(\gamma=5\cdot10^{-3}\),
\(\tau=10^{-3}\), and \(T=5\cdot10^{-3}\).
The errors are measured in
the natural solution spaces, and we additionally report the discrete time-derivative errors in
\(L^2(0,T;H^{-1}_h)\) and \(L^2(0,T;L^2)\). The results are shown in
Figures~\ref{fig:convergence-bell-argyris} and
\ref{fig:time-derivative-convergence}. For the Bell element, we observe rates
close to \(5\), \(4\), and \(3\) in \(L^\infty(0,T;L^2)\), \(L^\infty(0,T;H^1)\), and
\(L^\infty(0,T;H^2)\), respectively. For the Argyris element, the corresponding
observed rates are close to \(6\), \(5\), and \(4\). These rates agree with
the expected approximation orders of \Cref{thm:main-error}.

\begin{figure}[htp!]
\centering
\includegraphics[width=.9\textwidth,page=1]{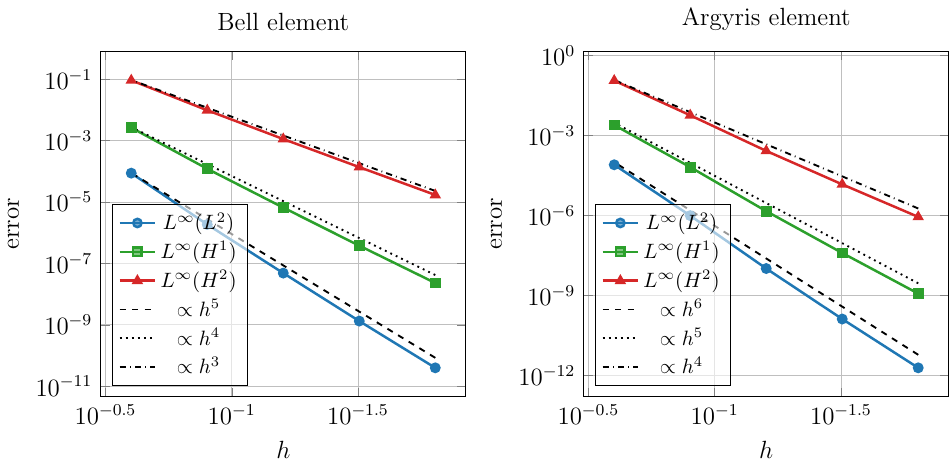}
\caption{Spatial convergence for the manufactured-solution test with fixed
time step \(\tau=10^{-3}\). The observed rates match the expected orders:
\(h^5,h^4,h^3\) for Bell and \(h^6,h^5,h^4\) for Argyris in
\(L^\infty(0,T;L^2)\), \(L^\infty(0,T;H^1)\), and \(L^\infty(0,T;H^2)\), respectively.}
\label{fig:convergence-bell-argyris}
\end{figure}

\begin{figure}[htp!]
\centering
\includegraphics[width=.6\textwidth,page=2]{Figures.pdf}
\caption{Convergence of the discrete time-derivative error in the
manufactured-solution experiment with fixed \(\tau=10^{-3}\).}
\label{fig:time-derivative-convergence}
\end{figure}

\subsection{Energy defect}

We next study the physical energy defect in a spinodal-decomposition-type
experiment. We use \(\gamma=10^{-4}\), \(\tau=10^{-4}\), and \(T=10^{-1}\).
The initial condition is the same low-mode trigonometric perturbation for all
three discretizations, with mean value \(0.4\). This gives almost identical
initial energies for the Fourier, Bell, and Argyris runs.

The Fourier computation uses \(128^2\) modes. Since Fourier spaces are
Laplacian-invariant, the theory predicts a vanishing spatial defect. The
finite element computations use Bell and Argyris elements on the same
\(N=64\) mesh. For the finite element discretizations we evaluate the defect by two
independent procedures: the balance form
$$R_h^{n,\mathrm{bal}}=(E_h^{n+1}-E_h^n)/\tau+D_h^n,$$ with $D_h^n
:=\|\delta_\tau\phi_h^{n+1}\|_{H^{-1}_h}^2$, and the direct form
$$
R_h^{n}:=(q_h^n,r_h^n),$$
with
$q_h^n=f'_{\rm avf}(\phi_h^{n+1},\phi_h^n)
-\gamma\Delta(\phi_h^{n+1}+\phi_h^n)/2$ and 
$r_h^n=\Delta w_h^n+\delta_\tau\phi_h^{n+1}.
$
The two should agree up to roundoff; we use the direct form in the
plots and verify agreement as a consistency check.
In addition to the pointwise ratio
\(|R_h^n|/D_h^n\), we therefore also consider the cumulative relative defect
$$
  \frac{\sum_{k=0}^{n-1}\tau |R_h^k|}
       {\sum_{k=0}^{n-1}\tau D_h^k}\quad \text{ where } D_h^k=\|\delta_\tau\phi_h^{k+1}\|_{H^{-1}_h}^2,
$$
which is less sensitive to sign changes of the pointwise defect.

The energy evolution and the signed defect are displayed in
Figure~\ref{fig:energy-and-signed-defect}. The energy curves are nearly
indistinguishable on this scale, showing that the three discretizations
produce essentially the same physical evolution. The defect plot, however,
separates the Laplacian-invariant Fourier discretization from the classical
\(C^1\) finite element spaces.
The Fourier defect is at roundoff level, with
$$
  \max_n |R^n|=5.4\cdot 10^{-14}.
$$
For the finite element spaces the defect is nonzero but small compared with
the discrete dissipation. For Bell elements we obtain
$$
  \max_n |R_h^n|=9.2\cdot 10^{-4},\qquad
  \max_n \frac{|R_h^n|}{D_h^n}=7.1\cdot 10^{-3},\qquad
  \frac{\sum_n \tau |R_h^n|}{\sum_n \tau D_h^n}=3.8\cdot 10^{-3}.
$$
For Argyris elements we obtain
$$
  \max_n |R_h^n|=4.4\cdot 10^{-4},\qquad
  \max_n \frac{|R_h^n|}{D_h^n}=3.4\cdot 10^{-3},\qquad
  \frac{\sum_n \tau |R_h^n|}{\sum_n \tau D_h^n}=1.9\cdot 10^{-3}.
$$
The direct defect computation and the balance-based computation agree up to about \(6.5\cdot 10^{-11}\) in both finite element runs. 
Thus the measured defect is not a postprocessing artefact; it is small but clearly above the roundoff-level Fourier defect.

\begin{figure}[htp!]
\centering
\includegraphics[width=.99\textwidth,page=3]{Figures.pdf}
\caption{Energy evolution and signed energy defect for the Fourier, Bell,
and Argyris discretizations. The energy curves are nearly indistinguishable
on this scale, while the defect plot shows the difference between the
Laplacian-invariant Fourier discretization and the classical \(C^1\) finite
element spaces. Sparse markers are used to distinguish the curves in
black-and-white print.}
\label{fig:energy-and-signed-defect}
\end{figure}

Figure~\ref{fig:spectral-defect} isolates the Fourier defect on a logarithmic
scale. As predicted by the Laplacian-invariance argument, the defect remains
at roundoff level throughout the simulation. This confirms that the nonzero
defects observed for Bell and Argyris elements are not caused by the time
discretization or by the diagnostic itself, but by the lack of exact
Laplacian invariance of the finite element spaces.

\begin{figure}[htp!]
\centering
\includegraphics[width=.6\textwidth,page=4]{Figures.pdf}
\caption{Absolute defect for the Fourier discretization. The defect remains
at roundoff level throughout the simulation, confirming the vanishing-defect
property for Laplacian-invariant spaces.}
\label{fig:spectral-defect}
\end{figure}

Finally, Figure~\ref{fig:defect-ratio} compares the finite element defects
with the corresponding discrete dissipation. The left panel shows that
\(|R_h^n|\) remains several orders of magnitude below \(D_h^n\) for both
Bell and Argyris elements. The right panel gives the pointwise and cumulative
relative defects. In particular, the cumulative ratios stay below
\(4\cdot10^{-3}\), showing that the total defect contribution is small
relative to the total Cahn--Hilliard dissipation over the simulated time
interval.

\begin{figure}[htp!]
\centering
\includegraphics[width=.42\textwidth,page=5]{Figures.pdf}\includegraphics[width=.42\textwidth,page=6]{Figures.pdf}
\caption{Left: Comparison of the absolute direct defect \(|R_h^n|\) and the
discrete dissipation
\(D_h^n=\|\delta_\tau\phi_h^{n+1}\|_{H^{-1}_h}^2\) for Bell and Argyris
elements. Right: Pointwise relative defect \(|R_h^n|/D_h^n\) and cumulative
relative defect
\(\sum_n\tau |R_h^n|/\sum_n\tau D_h^n\). }
\label{fig:defect-ratio}
\end{figure}

\appendix
\section{Discrete elliptic reconstruction}\label{app:reconstruction}

In this appendix, we collect the discrete elliptic reconstruction estimate
used in \Cref{lem:Drel-estimate} and, through it, in the augmented
relative-energy estimate of \Cref{thm:aug-stability}. The result states
that the inverse discrete Laplacian, viewed as a map from
$\mathring V_h\subset L^2(\Omega)$ into $V_h$, satisfies an $H^2$ stability
estimate analogous to the continuous elliptic regularity bound. The proof
proceeds in two steps: first, the inverse discrete Laplacian is identified
with the Poisson Ritz projection of the continuous inverse Laplacian; then
the Ritz projection is shown to be $H^2$-stable using the quasi-interpolant
of \Cref{lem:interpolant} together with the inverse inequality
\eqref{eq:inverse-H2-H1}. Both steps are essentially standard, but the
combination is what allows the relative defect $\mathcal D_h$ to be
controlled in the discrete $H^{-1}$ metric inherent to the Cahn--Hilliard
gradient-flow structure.

We define the mean-free Poisson Ritz projection
$P_h:\mathring H^1_{\per}(\Omega)\to\mathring V_h$ by
\begin{equation}\label{eq:poisson-ritz}
(\nabla(P_hv-v),\nabla\chi_h)=0
\qquad\forall \chi_h\in\mathring V_h.
\end{equation}

\begin{lemma}[$H^2$ stability of the Poisson Ritz projection]\label{lem:poisson-ritz-H2}
Assume \Cref{lem:interpolant} and the inverse inequality
\eqref{eq:inverse-H2-H1}. Then
\begin{equation}\label{eq:poisson-ritz-H2}
\|P_hv\|_{H^2}\le C\|v\|_{H^2}
\qquad\forall v\in\mathring H^2_{\per}(\Omega).
\end{equation}
\end{lemma}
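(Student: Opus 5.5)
The plan is the standard "compare with a quasi-interpolant and use an inverse estimate" argument. Fix $v\in\mathring H^2_{\per}(\Omega)$ and set $\chi:=\mathcal I_h v-\overline{\mathcal I_h v}\in\mathring V_h$. Constants lie in $V_h$ and are annihilated by both $\nabla$ and $\Delta$, so $\chi$ inherits all the bounds of \Cref{lem:interpolant} that involve only derivatives.

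We split
$$
P_hv=\chi+(P_hv-\chi),
$$
and bound the two pieces separately.

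\emph{The first piece.} By \eqref{eq:I-H2-stab} we have $\|\Delta\chi\|=\|\Delta\mathcal I_h v\|\le C\|v\|_{H^2}$. Since $\chi$ is mean-free, \eqref{eq:poincare-H2} then gives $\|\chi\|_{H^2}\le C\|v\|_{H^2}$.

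\emph{The second piece.} Here $P_hv-\chi\in\mathring V_h$ is a finite element function, so the inverse inequality \eqref{eq:inverse-H2-H1} applies:
$$
\|P_hv-\chi\|_{H^2}\le Ch^{-1}\|P_hv-\chi\|_{H^1}\le Ch^{-1}\|\nabla(P_hv-\chi)\|,
$$
where the last step uses \eqref{eq:poincare-H1} on the mean-free function. It therefore suffices to show
$$
\|\nabla(P_hv-\chi)\|\le Ch\|v\|_{H^2}.
$$
The triangle inequality gives
$$
\|\nabla(P_hv-\chi)\|\le\|\nabla(P_hv-v)\|+\|\nabla(v-\chi)\|.
$$
The definition \eqref{eq:poisson-ritz} says that $P_h$ is the $H^1$-seminorm orthogonal projection onto $\mathring V_h$. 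Galerkin orthogonality (the best-approximation property in the $\nabla$-seminorm) therefore yields
$$
\|\nabla(P_hv-v)\|\le\|\nabla(v-\chi)\|.
$$
Finally, $\|\nabla(v-\chi)\|=\|\nabla(v-\mathcal I_hv)\|\le Ch\|v\|_{H^2}$ by \eqref{eq:I-H1-H2}. Combining these bounds, the factor $h$ cancels the $h^{-1}$ from the inverse inequality, which gives $\|P_hv-\chi\|_{H^2}\le C\|v\|_{H^2}$.

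Adding the two pieces proves \eqref{eq:poisson-ritz-H2}.

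I do not expect a genuine obstacle. The only delicate points are checking that the mean correction of $\mathcal I_hv$ preserves the interpolant bounds, and that both the inverse inequality and the Poincaré equivalences are applied only to mean-free discrete functions. Quasi-uniformity of $\mathcal T_h$ is what justifies the global inverse estimate \eqref{eq:inverse-H2-H1}.
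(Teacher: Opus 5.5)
Your proof is correct and follows essentially the same route as the paper. Both split off a quasi-interpolant bounded by \eqref{eq:I-H2-stab}, then control the discrete remainder with the inverse inequality \eqref{eq:inverse-H2-H1}, C\'ea/best approximation and \eqref{eq:I-H1-H2}, so that the factors $h^{-1}$ and $h$ cancel. The only cosmetic difference is that you mean-correct $\mathcal I_hv$ from the start and work in the gradient seminorm, where constants drop out. This makes the best-approximation step hold with constant one and removes the need to bound the mean correction separately, as the paper's choice of $\chi_h$ implicitly requires.
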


\begin{proof}
By the triangle inequality,
$$
\|P_hv\|_{H^2}
\le
\|\mathcal I_hv\|_{H^2}
+\|P_hv-\mathcal I_hv\|_{H^2}.
$$
Using \eqref{eq:I-H2-stab} and \eqref{eq:inverse-H2-H1},
$$
\|P_hv\|_{H^2}
\le
C\|v\|_{H^2}
+Ch^{-1}\|P_hv-\mathcal I_hv\|_{H^1}.
$$
By the triangle inequality and C\'ea's lemma for \eqref{eq:poisson-ritz},
$$
\|P_hv-\mathcal I_hv\|_{H^1}
\le
\|P_hv-v\|_{H^1}
+\|v-\mathcal I_hv\|_{H^1}
\le
C\inf_{\chi_h\in\mathring V_h}\|v-\chi_h\|_{H^1}
+\|v-\mathcal I_hv\|_{H^1}.
$$
Choosing the mean-corrected interpolant
$$
\chi_h=\mathcal I_hv-\frac{(\mathcal I_hv,1)}{|\Omega|}
\in\mathring V_h
$$
and using \eqref{eq:I-H1-H2}, we obtain
$$
\|P_hv-\mathcal I_hv\|_{H^1}\le Ch\|v\|_{H^2}.
$$
This proves \eqref{eq:poisson-ritz-H2}.
\end{proof}

\begin{proposition}[Discrete elliptic reconstruction]\label{prop:reconstruction}
For all $z_h\in\mathring V_h$,
\begin{equation}\label{eq:reconstruction}
\left\|\Delta\bigl((-\Delta_h)^{-1}z_h\bigr)\right\|
\le C\|z_h\|.
\end{equation}
\end{proposition}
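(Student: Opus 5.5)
The plan follows the two-step strategy announced at the beginning of this appendix. First, identify the discrete inverse Laplacian with the Poisson Ritz projection \eqref{eq:poisson-ritz} of the continuous inverse Laplacian. Then apply the $H^2$ stability of \Cref{lem:poisson-ritz-H2}.

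Fix $z_h\in\mathring V_h$. Since $z_h$ is mean-free and in $L^2$, there is a unique periodic solution $z\in\mathring H^1_{\per}(\Omega)$ of $-\Delta z=z_h$. By elliptic regularity on the torus (Fourier series), $z\in\mathring H^2_{\per}(\Omega)$ with $\|z\|_{H^2}\le C\|\Delta z\|=C\|z_h\|$, using \eqref{eq:poincare-H2}.

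Next I would verify that $w_h:=(-\Delta_h)^{-1}z_h=P_hz$. For every $\chi_h\in\mathring V_h$, integration by parts gives
$$
(\nabla z,\nabla\chi_h)=(-\Delta z,\chi_h)=(z_h,\chi_h)=(\nabla w_h,\nabla\chi_h),
$$
where the last equality is \eqref{eq:discrete-inverse}. Hence $(\nabla(w_h-z),\nabla\chi_h)=0$ for all $\chi_h\in\mathring V_h$. Since $w_h\in\mathring V_h$, this is exactly \eqref{eq:poisson-ritz}. Uniqueness holds because $(\nabla\cdot,\nabla\cdot)$ is coercive on $\mathring V_h$ by \eqref{eq:poincare-H1}, so $w_h=P_hz$.

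Finally, \Cref{lem:poisson-ritz-H2} gives
$$
\|\Delta w_h\|\le\|w_h\|_{H^2}=\|P_hz\|_{H^2}\le C\|z\|_{H^2}\le C\|z_h\|,
$$
which is \eqref{eq:reconstruction}. The constant is independent of $h$ because the constants in \Cref{lem:poisson-ritz-H2} and in the continuous regularity estimate are.

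The only nontrivial ingredient is the $H^2$ stability of $P_h$. This is where quasi-uniformity enters, through the inverse inequality \eqref{eq:inverse-H2-H1} and the $H^2$-stable quasi-interpolant of \Cref{lem:interpolant}. Since that is already established in \Cref{lem:poisson-ritz-H2}, the remaining argument is routine. The one point needing care is that both $w_h$ and $z$ are mean-free, so the identification $w_h=P_hz$ is made within $\mathring V_h$ and the projection is well defined.
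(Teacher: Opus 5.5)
Your proposal is correct and matches the paper's proof: solve the continuous mean-free periodic Poisson problem with data $z_h$ and use elliptic regularity for the $H^2$ bound. Then identify $(-\Delta_h)^{-1}z_h$ with its Poisson Ritz projection and conclude via the $H^2$ stability of \Cref{lem:poisson-ritz-H2}.
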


\begin{proof}
Let $p\in\mathring H^1_{\per}(\Omega)\cap H^2_{\per}(\Omega)$ solve
$$
(\nabla p,\nabla v)=(z_h,v)
\qquad\forall v\in\mathring H^1_{\per}(\Omega).
$$
By periodic elliptic regularity,
$$
\|p\|_{H^2}\le C\|z_h\|.
$$
Let $w_h:=(-\Delta_h)^{-1}z_h$. Comparing the definition of $w_h$ with the
Poisson problem above shows that $w_h=P_hp$. Hence
$$
\|\Delta w_h\|
\le
\|w_h\|_{H^2}
=
\|P_hp\|_{H^2}
\le
C\|p\|_{H^2}
\le
C\|z_h\|.
$$
\end{proof}

{\small 
\bibliographystyle{abbrv}
\bibliography{references}
}

\end{document}